\newcommand{\C}[1]{{\protect\mathcal{#1}}}
\newcommand{\B}[1]{{\mathbold #1}}
\newcommand{\I}[1]{{\mathbbm #1}}
\newcommand{\V}[1]{\mathbold{#1}}
\newcommand{\e}{\varepsilon}
\newcommand{\floor}[1]{\lfloor #1\rfloor}
\newcommand{\me}{{\mathrm e}}
\renewcommand{\mid}{:}
\renewcommand{\ge}{\geqslant}
\renewcommand{\le}{\leqslant}
\newif\ifnotesw\noteswtrue
\newcommand{\hide}[1]{}
\newcommand{\beq}[1]{\begin{equation}\label{#1}}
	\newcommand{\eeq}{\end{equation}}
\newtheorem{theorem}{Theorem}[section]
\newtheorem{corollary}[theorem]{Corollary}
\newtheorem{lemma}[theorem]{Lemma}
\newtheorem*{claim*}{Claim}
\theoremstyle{definition}
\newtheorem{definition}[theorem]{Definition}
\theoremstyle{remark}
\newtheorem{remark}[theorem]{Remark}
\newcommand{\tP}{\widetilde{\mathbb{P}}}
\newcommand{\tW}{\widetilde{\mathcal{W}}}
\def\leukfrac#1/#2{\leavevmode
               \kern.1em
                \raise.9ex\hbox{\the\scriptfont0 ${}_#1$}
                \hskip -1pt\kern-.1em
                /\kern-.15em\lower.10ex\hbox{\the\scriptfont0 ${}_#2$}}
\def\diam{\mathop{\operator@font diam}\nolimits}
\newcommand{\DZ}[1]{\cite[#1]{DemboZeitouni10ldta}}
\newcommand{\RS}[1]{\cite[#1]{RassoulaghaSeppelainen14cldigm}}
\newcommand{\Lo}[1]{\cite[#1]{Lovasz:lngl}}
\newcommand{\Ak}{{\bf A}^{(k)}}
\newcommand{\Wk}{\mathcal{W}^{(k)}}
\newcommand{\tWk}{\widetilde{\mathcal{W}}^{(k)}}
\newcommand{\Walpha}{\mathcal{W}\times \Aalpha}
\newcommand{\tWalpha}{\widetilde{\mathcal{W}}^{(\V\alpha)}}
\newcommand{\T}[1]{\widetilde{#1}}
\newcommand{\tR}{\widetilde{\mathbb{R}}}
\newcommand{\dd}{\,\mathrm{d}}
\newcommand{\Aalpha}{{\bf A}^{(\V\alpha)}}
\newcommand{\f}[1]{f^{#1}}
\newcommand{\tf}[1]{\T{f^{#1}}}
\newcommand{\Sball}{S}
\newcommand{\cI}[2]{I_{#2}^{(#1)}}
\newcommand{\NoZero}{}
\newcommand{\NZ}{\I N_{\ge 0}}
\newcommand{\Tk}[1]{\widetilde{(#1)}}
\renewcommand{\rho}{r}
\begin{document}

\title{Large Deviation Principles for Block and Step Graphon Random Graph Models}
\author{Jan Greb\'\i k\\
	Mathematics Institute\\
	University of Warwick\\
	Coventry CV4 7AL, UK \and 
	Oleg Pikhurko\\
	Mathematics Institute and DIMAP\\
	University of Warwick\\
	Coventry CV4 7AL, UK}

\maketitle

\noindent{\large\bf This preprint is fully superseded by arXiv:2311.06531, as 
its all main results are included (with the same proofs) into the latter.}

\begin{abstract}
 Borgs, Chayes, Gaudio, Petti and Sen~\cite{BCGPS} proved a large deviation principle for 
 block model random graphs with rational block ratios. We strengthen their result by allowing any block ratios (and also establish a simpler formula for the rate function). We apply the new result to derive a large deviation principle for graph sampling from any given step graphon.
\end{abstract}

\section{Introduction}

The theory of large deviations (see e.g.\ 
the books by Dembo and Zeitouni~\cite{DemboZeitouni10ldta} or Rassoul-Agha and Sepp\"al\"ainen~\cite{RassoulaghaSeppelainen14cldigm})
% books~\cite{DemboZeitouni10ldta,DemboZeitouni10ldta}
 studies the probabilities of rare events on the exponential scale. 
This is formally captured by the following definition.

\begin{definition}\label{df:LDP}
	A sequence of Borel probability measures $(\mu_n)_{n\in \I N}$ on a 
	topological space $X$ satisfies a \emph{large deviation principle} (\emph{LDP} for short) with \emph{speed} 
	$s:\I N\to(0,\infty)$
	and \emph{rate function} $I:X\to[0,\infty]$ if
	\begin{itemize}[nosep]
		\item 
	$s(n)\to\infty$ as $n\to\infty$, 
	\item the function $I$ is \emph{lower semi-continuous} (\emph{lsc} for short), that is, for each $\rho\in\I R$
	the level set $\{I\le \rho\}:=\{x\in X\mid I(x)\le \rho\}$ is a closed subset of~$X$,
	\item the following \emph{lower bound} holds:
	\beq{eq:lowerGen}
		\liminf_{n\to\infty} \frac1{s(n)}\,{\log\, \mu_n(G)} 
		\ge -\inf_{x\in G} I(x),\quad \mbox{for every open $G\subseteq X$,}
		\eeq
		\item the following \emph{upper bound} holds:
		\beq{eq:upperGen}\limsup_{n\to\infty} \frac1{s(n)}\,{ \log\,\mu_n(F)} \le  -\inf_{x\in F} I(x),\quad \mbox{for every closed $F\subseteq X$.}
\eeq
\end{itemize}
\end{definition}

As it is well-known (see e.g.\ \RS{Lemma~2.11}), if~\eqref{eq:lowerGen} and~\eqref{eq:upperGen} hold for some (not necessarily lsc) function $I:X\to [0,\infty]$ then we can replace $I$ without violating these bounds by its \emph{lower semi-continuous
	regularization} 
\beq{eq:lscR}
 I_{\mathrm{lsc}}(x):=\sup\left\{\inf_{y\in G} I(y)\mid G\ni x\mbox{ and $G\subseteq X$ is open}\right\},\quad x\in X;
\eeq 
furthermore (see e.g.\ \RS{Lemma~2.8}), 
 $I_{\mathrm{lsc}}$ is lower semi-continuous  and, in fact, $I_{\mathrm{lsc}}$ is the largest lsc function with $I_{\mathrm{lsc}}\le I$. 
%Thus if $I$ is lower semi-continuous then $I_{\mathrm{lsc}}=I$. 
If $X$ is a regular topological space then there can be at most one lower semi-continuous rate function satisfying Definition~\ref{df:LDP} (see e.g.\ \DZ{Lemma 4.1.4} or \RS{Theorem~2.13}). This (as well as some other results, such as Lemma~\ref{lm:LDP} below) motivates the requirement that $I$ is lsc in Definition~\ref{df:LDP}.

Large deviations for various models of random graphs have been receiving much attention in the recent years; see e.g.\ the survey by Chatterjee~\cite{Chatterjee16bams}, or \cite[Section~1.7]{BCGPS} for references to some more recent results. A basic but central model
is the \emph{binomial random graph} $\I G(n,p)$, where the vertex set is $[n]:=\{1,\dots,n\}$ and 
each pair of vertices is an edge with probability~$p$, independently of other pairs. A large deviation principle for $\I G(n,p)$ for constant $p\in (0,1)$ was established in a ground-breaking paper of
Chatterjee and Varadhan~\cite{ChatterjeeVaradhan11} as follows. (See also the exposition of this proof in  Chatterjee's book~\cite{Chatterjee17ldrg}.) 

As it turns out, the ``correct'' setting is to consider \emph{graphons}, that is,  measurable symmetric functions $[0,1]^2\to [0,1]$. On the set $\C W$  of all {graphons}, one can define the so-called \emph{cut-distance} $\delta_\Box$, which is a pseudo-metric on $\C W$ (see Section~\ref{graphons} for all missing definitions related to graphons). Consider the factor space 
$$
\tW:=\{\T U: U\in\C W\},
$$ 
where $\T U:=\{V\in\C W\mid \delta_\Box(U,V)=0\}$ consists of all graphons \emph{weakly isomorphic} to~$U$.
 The space $(\tW,\delta_\Box)$ naturally appears in the limit theory of dense graphs, see e.g.\ the book by
Lov\'asz~\cite{Lovasz:lngl}. In particular, a graph $G$ on $[n]$ can be identified with the graphon $\f{G}$ where we partition $[0,1]$ into intervals of length $1/n$ each and let $\f{G}$ be the $\{0,1\}$-valued step function that encodes the adjacency relation.
This way, $\I G(n,p)$ gives a (discrete) probability measure $\tP_{n,p}$ on $(\tW,\delta_\Box)$, where $\tP_{n,p}(A)$ for $A\subseteq \tW$ is the probability that the sampled graph,
when viewed as a graphon up to weak isomorphism, belongs to the set~$A$. 

Also, recall that, for $p\in [0,1]$,  the \emph{relative entropy} $h_p:[0,1]\to [0,\infty]$ is defined by
\beq{eq:hp}
 h_p(\rho):=\rho\log\left(\frac{\rho}{p}\right)+(1-\rho)\log\left(\frac{1-\rho}{1-p}\right),\quad \rho\in[0,1].
\eeq

\begin{theorem}[Chatterjee and Varadhan~\cite{ChatterjeeVaradhan11}]
	\label{th:CV}
	Let $p\in [0,1]$. The function $I_p:\C W\to [0,\infty]$ defined by 
	\beq{eq:IpCV}
	 I_p(U):=\frac12 \int_{[0,1]^2} h_p(U(x,y))\dd x\dd y,\quad U\in\C W,
	 \eeq
	 gives a well-defined function $\tW\to [0,\infty]$ (that is, $I_p$ assumes the same value at any two graphons at $\delta_\Box$-distance 0) which is lower semi-continuous on $(\tW,\delta_\Box)$. Moreover,
	 the sequence of measures $(\tP_{n,p})_{n\in\I N}$ on $(\tW,\delta_\Box)$ satisfies an LDP with speed $n^2$ and rate function~$I_p$.
	 \end{theorem}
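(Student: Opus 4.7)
The plan is to establish the three claims separately: well-definedness of $I_p$ as a function on $\tW$, lower semi-continuity in cut distance, and the LDP itself.

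For well-definedness, if $\delta_\Box(U,V)=0$ then $U$ and $V$ are weakly isomorphic, so there exist measure-preserving maps $\phi,\psi$ (into a common probability space) with $U^\phi=V^\psi$ almost everywhere; since the integrand $h_p(U(x,y))$ is invariant under measure-preserving rearrangements of the two coordinates, $I_p(U)=I_p(V)$. For lower semi-continuity I would exploit convexity of $h_p$: the averaging operator $U\mapsto U^{\mathcal{P}}$ associated with a finite measurable partition $\mathcal{P}$ of $[0,1]$ satisfies $I_p(U^{\mathcal{P}})\le I_p(U)$ by Jensen's inequality, and on step graphons $I_p$ is a continuous function of the block values, hence $\delta_\Box$-continuous. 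Given a $\delta_\Box$-convergent sequence $U_n\to U$, the weak regularity lemma lets one approximate every $U_n$ by a step graphon on a common partition with error tending to zero, and taking limits yields $I_p(U)\le\liminf_n I_p(U_n)$.

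The LDP itself rests on the following combinatorial estimate. Given a step graphon $W$ with block proportions $(\alpha_i)_{i=1}^k$ and values $(w_{ij})$, the number of labelled graphs on $[n]$ whose empirical graphon lies in a small $\delta_\Box$-neighbourhood of $\widetilde{W}$ is $\exp(\tfrac12 n^2 \sum_{i,j}\alpha_i\alpha_j\, h(w_{ij})+o(n^2))$ (with $h$ the binary entropy), by a standard Stirling-based entropy count; while each such graph carries $\tP_{n,p}$-mass of order $\exp(\tfrac12 n^2\sum_{i,j}\alpha_i\alpha_j(w_{ij}\log p+(1-w_{ij})\log(1-p))+o(n^2))$. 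Using $h(w)+w\log p+(1-w)\log(1-p)=-h_p(w)$, these multiply to give $\tP_{n,p}$-mass of order $\exp(-n^2 I_p(W)+o(n^2))$. The lower bound~\eqref{eq:lowerGen} then follows: for any open $G\ni\widetilde U$ pick a step graphon $\widetilde W\in G$ with $I_p(W)\le I_p(U)+\varepsilon$ (available from density of step graphons together with the lsc of $I_p$) and apply the estimate.

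The upper bound~\eqref{eq:upperGen} is the main obstacle. Using compactness of $(\tW,\delta_\Box)$ (by the Lov\'asz--Szegedy compactness theorem), it suffices to show that for every $\widetilde U$ and every $\varepsilon>0$ there is a small $\delta_\Box$-ball $B$ about $\widetilde U$ with $\limsup_{n\to\infty}\tfrac1{n^2}\log\tP_{n,p}(B)\le -I_p(U)+\varepsilon$. I would apply Szemer\'edi's regularity lemma to each graph $G$ with $\widetilde{f^G}\in B$, obtaining a step graphon $W_G$ on a bounded number of parts close to $f^G$, and then use the counting estimate together with $I_p(W_G)\ge I_p(U)-\varepsilon$ (valid by lsc once $B$ is small enough). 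The technical heart is simultaneously controlling the number of possible regularity partitions, the approximation error $f^G\approx W_G$, and the propagation of the lsc bound; this is where Chatterjee and Varadhan~\cite{ChatterjeeVaradhan11} invest most of their effort.
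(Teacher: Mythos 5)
This theorem is not proved in the paper at all: it is quoted verbatim from Chatterjee and Varadhan~\cite{ChatterjeeVaradhan11} and used as a black box (e.g.\ in Lemma~\ref{lm:lsc}), so there is no internal proof to compare your attempt against. What you have written is a reconstruction of the original Chatterjee--Varadhan argument, and at the level of strategy it is the right one: well-definedness via weak isomorphism and invariance of $I_p$ under measure-preserving pull-backs; lower semi-continuity via convexity of $h_p$ and stepping operators; the LDP lower bound via an entropy count of graphs near a step graphon combined with the identity $h(w)+w\log p+(1-w)\log(1-p)=-h_p(w)$; and the upper bound via Szemer\'edi regularity. Two small local corrections: (i) for the lower bound you justify the existence of a step graphon $\widetilde W\in G$ with $I_p(W)\le I_p(U)+\varepsilon$ by ``lsc of $I_p$'', but lower semi-continuity gives the inequality in the \emph{wrong} direction; the correct justification is the Jensen inequality $I_p(U^{\mathcal P})\le I_p(U)$ you already stated, together with $\delta_\Box(U^{\mathcal P},U)\to 0$ along weak-regularity partitions; (ii) the weak regularity lemma does not produce a ``common partition'' for all the $U_n$ -- one instead fixes a partition adapted to $U$ and uses that averaging over a fixed finite partition is continuous in the cut norm; and the degenerate cases $p\in\{0,1\}$ (where $\log p$ or $\log(1-p)$ is $-\infty$ and $h_p$ is unbounded) need separate, if easy, treatment throughout.

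The genuine gap is the upper bound~\eqref{eq:upperGen}, which you correctly identify as the heart of the matter but then do not prove: you sketch the intended reduction and conclude with ``this is where Chatterjee and Varadhan invest most of their effort.'' That deferral is precisely the content of the theorem. Concretely, what is missing is the argument that, after applying the regularity lemma to every graph $G$ with $\widetilde{f^G}$ in a small ball $B$ around $\widetilde U$, one can sum the resulting mass estimates over all admissible regularity partitions and all discretised block-value vectors and still obtain only an $\me^{o(n^2)}$ multiplicative loss, while simultaneously guaranteeing that every step graphon $W_G$ arising this way is close enough to $\widetilde U$ for the lower semi-continuity bound $I_p(W_G)\ge I_p(U)-\varepsilon$ to apply uniformly in $G$ and $n$. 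Without carrying out this union bound (or an equivalent compactness/covering argument on $(\tW,\delta_\Box)$), the upper bound is asserted rather than established, so the proposal stands as a correct outline but not a proof.
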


Borgs, Chayes, Gaudio, Petti and Sen~\cite{BCGPS} extended this result to $k$-block stochastic models as follows. 
Let $k\ge 1$ be a fixed integer. Let $\V p=(p_{i,j})_{i,j\in [k]}\in [0,1]^{k\times k}$ be a symmetric $k\times k$ matrix with entries in~$[0,1]$. For an integer vector $\V a=(a_1,\dots,a_k)\in\NZ^k$, where 
$$
\NZ:=\{0,1,2,\ldots\}
$$
 denotes the set of non-negative integers, let $\tP_{\V a,\V p}$ be the probability distribution on $\tW$ defined as follows.  Set $n$ to be $\|\V a\|_1=a_1+\dots+a_k$ and  let $(A_1,\dots, A_k)$ be the partition of $[n]$ into $k$ consecutive intervals with $A_i$ having $a_i$ elements. The random graph $\I G(\V a,\V p)$ on $[n]$ is produced by making each pair $\{x,y\}$ of $[n]$ an edge with probability $p_{i,j}$ where $i,j\in [k]$ are the indices with $x\in A_i$ and $y\in A_j$, with all choices made independently of each other. Output the weak isomorphism class $\tf{G}\in\tW$ of the graphon $\f{G}$ corresponding to the generated graph~$G\sim \I G(\V a,\V p)$ on~$[n]$. Informally speaking, we take $k$ blocks consisting of exactly $a_1,\dots,a_k$ vertices respectively, make pairs into edges with the probabilities given by the $k\times k$ matrix~$\V p$, and them forget the block structure.

Next, we define a rate function for a given non-zero real $k$-vector $\V\alpha=(\alpha_1,\dots,\alpha_k)\in[0,\infty)^k\NoZero$. 
 Let $(\cI{\V\alpha}{1},\dots,\cI{\V\alpha}{k})$ denote the partition of $[0,1]$ into consecutive intervals such that each interval $\cI{\V\alpha}{i}$ has length $\alpha_i/\|\V\alpha\|_1$. 
 Define the function $J_{\V\alpha,\V p}:\tW\to [0,\infty)$ by
 \beq{eq:JAlphaP}
J_{\V\alpha,\V p}(\T U):=\inf_{V\in \T U}\,  \frac12\sum_{i,j\in [k]} \int_{\cI{\V\alpha}{i}\times \cI{\V\alpha}{j}} h_{p_{i,j}}(V(x,y))\dd x\dd y,\quad U\in {\C W}.
 \eeq
 Note that the function $J_{\V\alpha,\V p}$ will not change if we multiply the vector $\V\alpha$ by any positive scalar.
 
In the above notation, the LDP of Borgs et al~\cite[Theorem 1 and Remark~2]{BCGPS} states the following.
 
 \begin{theorem}[Borgs, Chayes, Gaudio, Petti and Sen~\cite{BCGPS}]
 	\label{th:BCGPS}
 	Let $\V\alpha\in \NZ^k\NoZero$ be a non-zero integer $k$-vector and let $\V p\in [0,1]^{k\times k}$ be a symmetric $k\times k$ matrix. Then the sequence of measures $(\tP_{n\V\alpha,\V p})_{n\in\I N}$ on $(\tW,\delta_\Box)$ satisfies an LDP with speed $(n\,\|\V\alpha\|_1)^2$ and rate function~$(J_{\V\alpha,\V p})_{\mathrm{lsc}}$.
 	%, the lower semi-continuous regularization of $J_{\V\alpha,\V p}$ on $(\tW,\delta_\Box)$ as defined in~\eqref{eq:lscR}.
 	\end{theorem}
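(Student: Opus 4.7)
The plan is to reduce Theorem~\ref{th:BCGPS} to Theorem~\ref{th:CV} by a lifting-and-contraction argument. Because $\V\alpha$ is an integer vector, the edges of $\I G(n\V\alpha,\V p)$ inside each ordered pair of blocks $(i,j)\in [k]^2$ form an independent Erd\H{o}s--R\'enyi-type random graph of density $p_{i,j}$ (bipartite for $i\ne j$, monopartite for $i=j$). Applying Theorem~\ref{th:CV} (in its bipartite variant, obtained by the same Chatterjee--Varadhan argument) to each block pair and combining by independence should yield an LDP on a ``labeled'' block-graphon space, which will then descend to the desired LDP on $\tW$ via the forgetful contraction.

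Concretely, I would introduce the labeled block-graphon space $\tWalpha$ whose points are equivalence classes of graphons $V\in\C W$ under block-wise measure-preserving rearrangements (chosen independently on each interval $\cI{\V\alpha}{i}$), equipped with the corresponding labeled cut distance. The sampled graph $G\sim\I G(n\V\alpha,\V p)$ yields an element of $\tWalpha$ by identifying the $na_i$ vertices of $A_i$ with $na_i$ consecutive equal subintervals of $\cI{\V\alpha}{i}$. Because the edge sets across distinct unordered block pairs are independent, the induced distribution on $\tWalpha$ factors as a product of bipartite/monopartite Erd\H{o}s--R\'enyi graphon distributions; combining the block-wise LDPs (see e.g.\ \DZ{\S4.2.2}) gives an LDP at speed $(n\|\V\alpha\|_1)^2$ on $\tWalpha$ with the rate function
$$
R_{\V\alpha,\V p}(V) := \frac12\sum_{i,j\in [k]} \int_{\cI{\V\alpha}{i}\times\cI{\V\alpha}{j}} h_{p_{i,j}}(V(x,y))\dd x\dd y,
$$
which is well-defined on $\tWalpha$ by invariance under block-wise rearrangements. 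Since the forgetful quotient $q:\tWalpha\to\tW$, $V\mapsto \T V$, is continuous ($\delta_\Box$ being bounded above by any labeled cut distance) and $\tWalpha$ is compact, the contraction principle (see e.g.\ \DZ{Theorem 4.2.1}) transports this LDP to $\tW$ with rate function $\inf_{V\in q^{-1}(\T U)} R_{\V\alpha,\V p}(V) = J_{\V\alpha,\V p}(\T U)$, which Definition~\ref{df:LDP} then allows us to replace by $(J_{\V\alpha,\V p})_{\mathrm{lsc}}$.

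The main obstacle will be the clean setup of $\tWalpha$ and its interaction with $\tW$. Specifically, one must verify that the bipartite/monopartite versions of Theorem~\ref{th:CV} hold on each block space with the expected rate function (by replaying Chatterjee--Varadhan for bipartite graphons, or by encoding the bipartite problem inside a symmetric one), that $\tWalpha$ is indeed compact Hausdorff with a continuous forgetful map to $\tW$, and that the block-wise rearrangement group is rich enough so that the contraction infimum over $q^{-1}(\T U)$ agrees with the full weak-isomorphism infimum in~\eqref{eq:JAlphaP}. A secondary subtlety is handling the boundary cases $p_{i,j}\in\{0,1\}$ where $h_{p_{i,j}}$ blows up, which forces some care with the $\inf$ in the definition of $J_{\V\alpha,\V p}$ and in identifying $q^{-1}(\T U)$ as a $\sigma$-compact subset of $\tWalpha$.
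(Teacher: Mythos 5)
First, a point of order: the paper does not prove Theorem~\ref{th:BCGPS} at all. It is quoted verbatim from Borgs, Chayes, Gaudio, Petti and Sen~\cite{BCGPS} and used as a black box; the paper's own contributions are Theorems~\ref{th:JLSC}, \ref{th:GenLDP} and~\ref{th:ourLDP}. So your proposal has to stand on its own, and as written it does not: the decisive step is asserted rather than proved.

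The gap is the claim that the induced distribution on your labelled space ``factors as a product'', so that per-block-pair Chatterjee--Varadhan LDPs can be combined by independence and then pushed down to $\tW$ by the contraction principle. There are two candidate intermediate spaces here, and neither does both jobs. On the genuine product space $\prod_{i\le j}X_{ij}$, where $X_{ij}$ consists of (bi)graphons on $\cI{\V\alpha}{i}\times\cI{\V\alpha}{j}$ modulo rearrangements chosen \emph{independently for each pair}, the law of the sample really is a product measure and the LDP follows from the block-pair LDPs; but this space admits no well-defined (let alone continuous) forgetful map to $\tW$, since reassembling a graphon requires using the \emph{same} rearrangement of block $i$ in every pair $(i,j)$, and the classes in $X_{ij}$ retain no such synchronisation. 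On the space you actually describe --- graphons modulo a single block-preserving rearrangement applied consistently across all pairs --- the forgetful map to $\tW$ is continuous, but the law of the sample is \emph{not} a product over block pairs, because the quotient couples the coordinates; in particular the LDP upper bound there does not follow from the marginal LDPs. Closing this gap is precisely the content of~\cite{BCGPS}: one must rerun the Chatterjee--Varadhan upper-bound machinery (weak regularity, covering by templates, absorbing the $e^{O(n\log n)}$ relevant relabellings at speed $n^2$) in the block-respecting setting. Your identification of the rate function, the final contraction step, the equality $\inf_{q^{-1}(\T U)}R_{\V\alpha,\V p}=J_{\V\alpha,\V p}(\T U)$, and the observation that a valid contraction argument would render the rate function lower semi-continuous (compare Corollary~\ref{cr:lsc}, which the paper proves directly from compactness of the coloured-graphon space) are all sound; it is the existence of the LDP on the synchronised quotient that your sketch assumes rather than establishes.
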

 
 Note that the special case $k=1$ and $\V\alpha=(1)$ of Theorem~\ref{th:BCGPS} and the assumption that
 the function $J_{\V\alpha,\V p}:\tW\to [0,\infty]$  is lower semi-continuous give the second part of Theorem~\ref{th:CV}.
 
 Our contribution is as follows.

First, we prove that the function $J_{\V\alpha,\V p}:\tW\to [0,\infty]$ is  lower semi-continuous (so, in particular, there is no need to take
 the lower semi-continuous
 regularization in Theorem~\ref{th:BCGPS}):
 
 \begin{theorem}\label{th:JLSC}
 For every symmetric matrix $\B p\in [0,1]^{k\times k}$ and every non-zero real $k$-vector $\V \alpha\in [0,\infty)^k$, the function $J_{\V\alpha,\V p}:\tW\to[0,\infty]$ is lower semi-continuous with respect to the metric~$\delta_\Box$.
 \end{theorem}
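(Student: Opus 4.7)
The plan is to lift the problem to a joint ``decorated'' space $\widetilde{\C Z}$ of equivalence classes $[U,\chi]$, where $U\in\C W$ is a graphon and $\chi=(A_1,\ldots,A_k)$ is a measurable partition of $[0,1]$ with $|A_i|=\alpha_i/\|\V\alpha\|_1$, quotiented by the diagonal action of measure-preserving bijections of $[0,1]$, equipped with the joint pseudo-metric
\[
d\big((U,\chi),(U',\chi')\big):=\inf_\phi\Big(\|U^\phi-U'\|_\Box+\sum_{i=1}^k|\phi^{-1}(A_i)\triangle A_i'|\Big).
\]
A change of variables along the measure-preserving bijection sending $\chi$ to the standard partition $(\cI{\V\alpha}{i})$ gives the reformulation
\[
J_{\V\alpha,\V p}(\T U)=\inf_{\xi\in\pi^{-1}(\T U)}F(\xi),\qquad F(U,\chi):=\tfrac12\sum_{i,j\in[k]}\int_{A_i\times A_j}h_{p_{i,j}}(U(x,y))\dd x\dd y,
\]
where $\pi\colon\widetilde{\C Z}\to\tW$ sends $[(U,\chi)]$ to $\T U$. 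The theorem will then follow from three ingredients: (a)~compactness of $\widetilde{\C Z}$; (b)~continuity of $\pi$; (c)~lower semi-continuity of $F$ on $\widetilde{\C Z}$. Given these, the standard principle applies: extract a near-minimising sequence $\xi_n\in\pi^{-1}(\T U_n)$, pass to a subsequential limit $\xi_\infty\in\widetilde{\C Z}$ by compactness, note $\pi(\xi_\infty)=\T U$ by continuity, and conclude $J_{\V\alpha,\V p}(\T U)\le F(\xi_\infty)\le\liminf F(\xi_n)\le\liminf J_{\V\alpha,\V p}(\T U_n)$.

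Ingredient (a) is the ``coloured graphon'' analogue of the compactness of $(\tW,\delta_\Box)$; one runs a weak regularity argument jointly on the graphon and on the partition (encoded via the $k$ indicators $\I 1_{A_i}$), and the linear constraints $|A_i|=\alpha_i/\|\V\alpha\|_1$ pass to the limit because measures of sets are continuous with respect to~$d$. Ingredient (b) is immediate from $\delta_\Box(\pi(\xi),\pi(\xi'))\le d(\xi,\xi')$. Ingredient (c) is the main analytic step; I plan to handle it via the Legendre--Fenchel duality for the Bernoulli relative entropy, $h_p(\rho)=\sup_{t\in\I R}\bigl(t\rho-\log(1-p+p\,e^t)\bigr)$, which gives
\[
F(U,\chi)=\sup_g G_g(U,\chi),\qquad G_g(U,\chi):=\tfrac12\int gU\,\dd x\dd y-\tfrac12\sum_{i,j}\int_{A_i\times A_j}\log\!\big(1-p_{i,j}+p_{i,j}e^{g(x,y)}\big)\dd x\dd y.
\]
A standard density argument reduces the supremum to $g$ being a finite linear combination of indicators of measurable rectangles $E_\ell\times F_\ell\subseteq[0,1]^2$.

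For each such step function $g$, the functional $G_g$ is continuous along joint cut-distance convergence: choosing representatives with $\|U_n-U\|_\Box\to 0$ and $|A_i^{(n)}\triangle A_i|\to 0$, the integral $\int gU_n$ converges because cut-norm convergence of graphons tests cleanly against any finite linear combination of rectangle indicators, and each block integral $\int_{A_i^{(n)}\times A_j^{(n)}}\log(\cdot)$ is a finite linear combination of products $|A_i^{(n)}\cap E_\ell|\cdot|A_j^{(n)}\cap F_\ell|$ that converge by $L^1$-convergence of the partition indicators. As a supremum of continuous functions, $F$ is then lsc on $\widetilde{\C Z}$; the boundary cases $p_{i,j}\in\{0,1\}$ cause no problem because the log terms stay bounded for bounded $g$, while the value $F=+\infty$ at a limit point is detected by taking step-function values with large $\|g\|_\infty$ supported inside the offending block. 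The main expected obstacle is ingredient (a): making the compactness of $\widetilde{\C Z}$ precise in a framework that keeps the partitions genuine rather than degenerating to soft/fractional colourings in the limit; this requires care with the coloured-graphon compactness machinery, but once granted the proof closes cleanly via Legendre duality and cut-norm estimates against step functions.
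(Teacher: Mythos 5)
Your proposal is correct, and its architecture coincides with the paper's: lift to a space of partition-decorated (``coloured'') graphons with a joint cut-type pseudometric, prove compactness of that space, check that the forgetful projection is $1$-Lipschitz and that the lifted entropy functional is lsc, and conclude by the contraction-principle argument on level sets / minimising sequences. The one place you genuinely diverge is ingredient (c): you prove lower semi-continuity of $F$ from scratch via the Legendre--Fenchel representation $h_p(\rho)=\sup_t\bigl(t\rho-\log(1-p+p\,e^t)\bigr)$ and testing against step functions, i.e.\ a self-contained re-run of the Chatterjee--Varadhan argument in the coloured setting. The paper instead reuses their result as a black box: it introduces maps $\Gamma_{i,j}$ that keep $U$ on $(A_i\times A_j)\cup(A_j\times A_i)$ and plant the constant $p_{i,j}$ elsewhere, observes these are $1$-Lipschitz into $(\tW,\delta_\Box)$, and writes the lifted functional as $\sum_{i\le j} I_{p_{i,j}}\circ\Gamma_{i,j}$, a finite sum of compositions of lsc functions with continuous maps. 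Your route buys independence from the lsc assertion in Theorem~\ref{th:CV} at the cost of redoing the duality and approximation analysis (including the $p_{i,j}\in\{0,1\}$ boundary cases and the interchange of supremum and integral); the paper's buys brevity. On the obstacle you flag in ingredient (a): the paper sidesteps the danger of fractional colourings in the limit by first applying, via the isomorphism theorem for measure spaces, a measure-preserving bijection turning each partition $\mathcal A_n$ into consecutive intervals, and then running weak regularity with partitions that refine $\mathcal A_n$ and are themselves normalised to intervals at every stage; the limit partition is then automatically a genuine interval partition with the limiting lengths, and the measure constraints pass to the limit exactly as you anticipated (the paper proves compactness of the whole coloured space and separately notes that the fixed-measure slice is closed, hence compact).
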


Second, we extend Theorem~\ref{th:BCGPS} by allowing the fraction of vertices assigned to a part to depend on $n$ as long as it converges to any finite (possibly irrational) limit.

\begin{theorem}\label{th:GenLDP}
	Fix 
	%an integer $k\ge 1$, 
	any symmetric $k\times k$ matrix $\V p\in [0,1]^{k\times k}$ and a non-zero real $k$-vector $\V\alpha=(\alpha_1,\dots,\alpha_k)\in [0,\infty)^k\NoZero$.  Let
	$$
	\V a_n=(a_{n,1},\dots,a_{n,k})\in \NZ^k\NoZero,\quad \mbox{for $n\in\I N$},
	$$ 
	be arbitrary non-zero integer $k$-vectors  such that $\lim_{n\to\infty} a_{n,i}/n=\alpha_i$ for each $i\in [k]$. 
	Then the sequence of measures $(\tP_{\V a_n,\V p})_{n\in\I N}$ on $(\tW,\delta_\Box)$ satisfies an LDP with speed $\|\V a_n\|_1^2$ and rate function~$J_{\V\alpha,\V p}$.
\end{theorem}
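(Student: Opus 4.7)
By Theorem~\ref{th:JLSC}, the function $J_{\V\alpha,\V p}$ is already lower semi-continuous, so no regularisation is needed; we only have to verify the upper and lower bounds~\eqref{eq:lowerGen} and~\eqref{eq:upperGen}. Since both $J_{\V\alpha,\V p}$ and $\tP_{\V a,\V p}$ are invariant under multiplying $\V\alpha$ (respectively, $\V a$) by a positive scalar, we may normalise $\|\V\alpha\|_1=1$. The plan is a sandwich/approximation argument: for each $\epsilon>0$, pick a rational vector $\V\alpha^\ast=\V\gamma/\|\V\gamma\|_1$ with $\V\gamma\in \NZ^k\NoZero$ and $\|\V\alpha-\V\alpha^\ast\|_1<\epsilon$, apply Theorem~\ref{th:BCGPS} to the sequence $(m_n\V\gamma)$ with $m_n:=\lfloor \|\V a_n\|_1/\|\V\gamma\|_1\rfloor$ to obtain an LDP with speed $(m_n\|\V\gamma\|_1)^2=(1+o(1))\|\V a_n\|_1^2$ and rate $J_{\V\alpha^\ast,\V p}=J_{\V\gamma,\V p}$, and then transfer this LDP to $(\V a_n)$ via a coupling before passing to the limit $\epsilon\to 0$.

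The coupling is the natural one: for two integer vectors $\V a,\V b$ with the common sum $N$, draw a single family $(U_{\{x,y\}})_{\{x,y\}\in\binom{[N]}{2}}$ of i.i.d.\ uniforms on $[0,1]$ and include a pair as an edge in each of $\I G(\V a,\V p)$ and $\I G(\V b,\V p)$ whenever $U_{\{x,y\}}$ is below the respective block probability. A comparison of partial sums shows that the two consecutive-block partitions of $[N]$ agree outside at most $2k\|\V a-\V b\|_1$ vertices, and hence the two graphons $\f{G_{\V a}},\f{G_{\V b}}$ differ on a subset of $[0,1]^2$ of measure at most $2k\|\V a-\V b\|_1/N$, giving the deterministic bound $\delta_\Box(\tf{G_\V a},\tf{G_\V b})\le 2k\|\V a-\V b\|_1/N$; when $\|\V a\|_1$ and $\|\V b\|_1$ differ by $O(1)$ (as is the case for $\V a_n$ versus $m_n\V\gamma$), an extra additive $O(1/N)$ term appears. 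Taking $\V b_n=m_n\V\gamma$, the coupling puts $\tf{G_{\V a_n}}$ and $\tf{G_{\V b_n}}$ within cut-distance $\eta=O(\epsilon)$, so for every open $G$ and closed $F$ in $\tW$ one obtains the sandwich
\[
\tP_{m_n\V\gamma,\V p}(G^{-\eta})\le \tP_{\V a_n,\V p}(G),\qquad \tP_{\V a_n,\V p}(F)\le \tP_{m_n\V\gamma,\V p}(F^{+\eta}),
\]
where $F^{+\eta}$ and $G^{-\eta}$ denote the open $\eta$-enlargement and the $\eta$-interior. Inserting the LDP for $(m_n\V\gamma)$ yields the desired LDP bounds for $(\V a_n)$, but with $J_{\V\alpha^\ast,\V p}$ evaluated over the enlarged/shrunken sets.

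The main obstacle is the final limit $\epsilon\to 0$ (so that $\V\alpha^\ast\to\V\alpha$ and $\eta\to 0$), which requires a quantitative continuity of the rate function in the parameter $\V\alpha$ sufficient to conclude
\[
\inf_{F^{+\eta}} J_{\V\alpha^\ast,\V p}\ \longrightarrow\ \inf_F J_{\V\alpha,\V p}\qquad\text{and}\qquad \inf_{G^{-\eta}} J_{\V\alpha^\ast,\V p}\ \longrightarrow\ \inf_G J_{\V\alpha,\V p}.
\]
For any fixed representative $V$, the partitions $(\cI{\V\alpha^\ast}{i})_{i\in[k]}$ and $(\cI{\V\alpha}{i})_{i\in[k]}$ of $[0,1]$ differ on a set of measure at most $k\|\V\alpha-\V\alpha^\ast\|_1$; provided every entry of $\V p$ lies in $(0,1)$, the functions $h_{p_{i,j}}$ are bounded, so the integrals defining $J_{\V\alpha,\V p}(\T V)$ and $J_{\V\alpha^\ast,\V p}(\T V)$ differ by $O(\epsilon)$ uniformly in $V$; combined with the fact that $(\tW,\delta_\Box)$ is compact (making $J_{\V\alpha,\V p}$ a good rate function) and lower semi-continuous by Theorem~\ref{th:JLSC}, this gives $\inf_{F^{+\eta}} J_{\V\alpha,\V p}\to \inf_F J_{\V\alpha,\V p}$ and converts the approximation error into a vanishing additive perturbation. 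The delicate case is when some $p_{i,j}\in\{0,1\}$: then $h_{p_{i,j}}$ is unbounded, and the comparison must restrict to graphons of finite $J$-value, for which the relevant entries of $V$ are forced to $p_{i,j}$, while the infimum over the weak-isomorphism class $\T U$ must be exploited (via measure-preserving bijections that absorb the shifted block boundaries) in order to recover continuity in $\V\alpha$.
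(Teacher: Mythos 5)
Your overall architecture matches the paper's: reduce to the rational case via Theorem~\ref{th:BCGPS}, couple $\I G(\V a_n,\V p)$ with a block model having proportional integer blocks, and then control how the rate function changes as the block-ratio vector is perturbed. Your coupling (shared uniforms on a common vertex set, with only the $O(k\|\V a-\V b\|_1)$ boundary vertices miscoloured) is correct and is an acceptable variant of the paper's Lemma~\ref{lm:DifferentRatios}, which instead matches subsets of the two vertex sets and invokes Lemma~\ref{lm:Delete01}.

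The genuine gap is in the step you yourself flag as delicate: the passage $\V\alpha^\ast\to\V\alpha$ when some $p_{i,j}\in\{0,1\}$. Your proposed mechanism --- exploiting the infimum over $\T U$ ``via measure-preserving bijections that absorb the shifted block boundaries'' --- cannot work, because a measure-preserving bijection of $[0,1]$ preserves the measures of the parts and therefore cannot convert a near-optimal witness for the partition $(\cI{\V\alpha}{i})_i$ into one for $(\cI{\V\alpha^\ast}{i})_i$. Indeed $\T V\mapsto J_{\V\alpha,\V p}(\T V)$ is genuinely discontinuous in $\V\alpha$ for fixed $\T V$: with $k=2$, $\V p$ the identity matrix and $V$ the disjoint union of two half-cliques, $J_{\V\alpha,\V p}(\T V)$ equals $0$ at $\V\alpha=(\frac12,\frac12)$ and $+\infty$ at every other $\V\alpha$, so no uniform bound of the form $|J_{\V\alpha,\V p}-J_{\V\alpha^\ast,\V p}|=O(\e)$ can hold, and your claimed convergence $\inf_{F^{+\eta}}J_{\V\alpha^\ast,\V p}\to\inf_F J_{\V\alpha,\V p}$ fails as stated. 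What is actually needed (and what the paper proves as Lemma~\ref{lm:ContOfJ}) is a one-sided statement that also perturbs the \emph{graphon}: for $\V\kappa$ entrywise at most $(1+\e)\V\gamma$, every $U$ admits a $V=U^\phi$ with $\delta_\Box(U,V)\le\eta$ and $J_{\V\kappa,\V p}(\T V)\le J_{\V\gamma,\V p}(\T U)+\eta$, where $\phi$ is a non-measure-preserving stretch controlled by Lemma~\ref{lm:AlmostMP}. Because the witness $V$ moves in $\tW$, this estimate must then be threaded through the LDP bookkeeping by comparing infima over balls of different radii ($\Sball(\T U,\eta)$ versus $\Sball(\T U,2\eta)$) before letting $\eta\to0$ via lower semi-continuity; this asymmetric bookkeeping, applied once in each direction for the upper and lower bounds, is the substantive content missing from your sketch. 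You also leave untreated the indices with $\alpha_i=0$ but $a_{n,i}>0$, which the paper handles by zeroing those coordinates before invoking the one-sided lemma.
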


One application of Theorem~\ref{th:GenLDP}
is as follows. Each graphon $W\in\C W$ gives rise to the following inhomogeneous random graph model. Namely, the \emph{random $W$-graph} $\I G(n,W)$ is generated by first sampling
uniform elements $x_1,\dots,x_n\in [0,1]$ and then making each pair $\{i,j\}$ an edge with probability $W(x_i,x_j)$, where all choices are independent of each other. Let $\tR_{n,W}$ be the corresponding (discrete) measure on~$\tW$ where we take the equivalence class $\tf{G}\in\tW$ of the sampled graph~$G$. When $W$ is the constant function $p$, we get exactly the binomial random graph $\I G(n,p)$ and $\tR_{n,W}=\tP_{n,p}$.

The authors showed in~\cite{GrebikPikhurko:LDP} that, for any graphon $W\in\C W$, the only ``interesting'' speeds for the sequence of measures $(\tR_{n,W})_{n\in\I N}$ are $\Theta(n)$ and $\Theta(n^2)$, and established a general LDP for speed~$n$. The case when speed is $n^2$ seems rather difficult. Here (in Theorem~\ref{th:ourLDP}) we prove an LDP for speed $n^2$
when $W$ is a \emph{$k$-step graphon}, that is, there is a measurable partition $[0,1]=A_1\cup\dots\cup A_k$ such that $W$ is a constant $p_{i,j}$ on each product $A_i\times A_j$, $i,j\in [k]$. We can assume that each $A_i$ has positive measure, since changing the values of $W$ on a null subset of $[0,1]^2$ does not affect the distribution of~$\I G(n,W)$. 

Before stating our LDP, let us point out the difference between the random graphs $\I G((a_1,\dots,a_k),(p_{i,j})_{i,j\in [k]})$ and $\I G(a_1+\dots+a_k,W)$ when $W$ and $(p_{i,j})_{i,j\in [k]}$ are as above.
% in the previous paragraph
In the former model, we have exactly $a_i$ vertices in the $i$-th block for each $i\in [k]$. In the latter model, each vertex is put into one of the $k$ blocks with the probabilities given by the measures of $A_1,\dots,A_k$, independently of the other vertices; thus the number
of vertices in each block is binomially distributed.
%the Lebesgue measure $\lambda(A_i)$ of~$A_i$ (thus the total number of vertices in the $i$-th block has the binomial distribution with parameters $(a_1+\dots+a_k,\lambda(A_i))$).
\hide{
Before stating our LDP, let us point out that some differences and relations between the random graphs $\I G(\V a,\V p)$ and $\I C(n,W)$ when $W$ and $\V p$ are as in the previous paragraph, $\V a=(a_1,\dots,a_k)$, and $n=a_1+\dots+a_k$.
In $\I G(\V a,\V p)$ we take exactly $a_i$ vertices from the $i$-th block. On the other hand, in $\I C(n,W)$ each vertex is put into one the blocks independently of the other vertices with probabilities equal to the measures
$\lambda(A_1),\dots,\lambda(A_k)$ of the parts; thus the total number of vertices in the $i$-th block has the binomial distribution with parameters $(n,\lambda(A_i))$. Once the vertices are assigned to blocks, the rule for generating the edges is same in both models: a pair connecting the $i$-th and $j$-th block is made an edge with probability $p_{i,j}$. Thus if we condition $\I C(n,W)$ on the blocks being consecutive intervals of $[n]$ of lengths respectively $a_1,\dots,a_k$, then the conditional distribution will be exactly the distribution of~$\I G(\V a,\V p)$.}%
It comes as no surprise that if we consider large deviations for $(\tR_{n,W})_{n\in\I N}$ at speed 
$n^2$ then the rate function depends only on $(p_{i,j})_{i,j\in [k]}$ but not on the (non-zero) measures of the parts $A_i$ since, informally speaking,  the price we ``pay'' to get any desired distribution of vertices per parts is multiplicative $\me^{-O(n)}$, which is negligible for speed~$n^2$.

\begin{theorem}\label{th:ourLDP}
	Let $W$ be a $k$-step graphon with $k$ non-null parts whose values are encoded by a symmetric $k\times k$ matrix $\V p\in [0,1]^{k\times k}$. Define
	\beq{eq:R}
	R_{\V p}(\T U):=\inf_{\V\alpha\in [0,1]^k\atop \alpha_1+\ldots+\alpha_k=1} J_{\V\alpha,\V p}(\T U),\quad U\in\C W.
	\eeq
	Then the function $R_{\V p}:\tW\to [0,\infty]$ is lower semi-continuous with respect to the metric $\delta_\Box$. Moreover, the sequence of measures $(\tR_{n,W})_{n\in\I N}$ on $(\tW,\delta_\Box)$ satisfies an LDP with speed $n^2$ and rate function~$R_{\V p}$.
	\end{theorem}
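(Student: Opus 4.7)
The strategy is to decompose $\tR_{n,W}$ as a convex combination of the measures $\tP_{\V a,\V p}$ from Theorem~\ref{th:GenLDP} and observe that the mixing weights contribute only a negligible factor at speed $n^2$. Specifically, conditioning on the vector $\V a=(a_1,\dots,a_k)$ of block sizes produced when $n$ i.i.d.\ uniform points are distributed among $A_1,\dots,A_k$ gives
\[\tR_{n,W}\;=\;\sum_{\V a\in\NZ^k,\;\|\V a\|_1=n}\!P_n(\V a)\,\tP_{\V a,\V p},\qquad P_n(\V a):=\binom{n}{\V a}\prod_{i=1}^k \lambda(A_i)^{a_i},\]
with $\sum_\V a P_n(\V a)=1$. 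Since every $\lambda(A_i)>0$, Stirling's formula gives $|\log P_n(\V a)|=O(n)=o(n^2)$ for every $\V a$ with $P_n(\V a)>0$, so these weights are invisible on the LDP scale. The infimum over $\V\alpha\in\Delta_k:=\{\V\alpha\in[0,1]^k:\sum_i\alpha_i=1\}$ in $R_{\V p}$ then arises from optimising over the block-size profile $\V a/n$.

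Lower semi-continuity of $R_{\V p}$ cannot be recovered from the LDP bounds alone and must be proved directly. The natural route is to establish joint lower semi-continuity of $(\V\alpha,\T U)\mapsto J_{\V\alpha,\V p}(\T U)$ on the compact product $\Delta_k\times\tW$. Combined with compactness of $\Delta_k$, this yields lsc of the infimum $R_{\V p}$ in the standard way: given $\T U_n\to\T U$, pick approximate minimisers $\V\alpha^{(n)}\in\Delta_k$ in the definition of $R_{\V p}(\T U_n)$, extract a subsequence with $\V\alpha^{(n)}\to\V\alpha^*$, and apply joint lsc. The joint lsc adapts the proof of Theorem~\ref{th:JLSC}: take approximate minimisers $V_n\in\T U_n$ for $J_{\V\alpha^{(n)},\V p}$, extract a $\delta_\Box$-accumulation point $V\in\T U$, and bound the error from replacing integration over $\cI{\V\alpha^{(n)}}{i}\times\cI{\V\alpha^{(n)}}{j}$ by integration over $\cI{\V\alpha^*}{i}\times\cI{\V\alpha^*}{j}$. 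The symmetric differences have Lebesgue measure $O(\|\V\alpha^{(n)}-\V\alpha^*\|_1)\to 0$, so this error is routine when all $p_{i,j}\in(0,1)$ and $h_{p_{i,j}}$ is bounded. The main obstacle is the case $p_{i,j}\in\{0,1\}$, where $h_{p_{i,j}}$ is unbounded; this is handled by observing that bounded $J_{\V\alpha^{(n)},\V p}(\T U_n)$ forces $V_n$ to concentrate near $p_{i,j}$ on the corresponding block, so the contribution near the shifting boundary remains small.

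For the LDP lower bound, fix open $G\subseteq\tW$, $\T U\in G$, and $\e>0$. Pick $\V\alpha\in\Delta_k$ with $J_{\V\alpha,\V p}(\T U)\le R_{\V p}(\T U)+\e$ and integer vectors $\V a_n$ with $\|\V a_n\|_1=n$ and $\V a_n/n\to\V\alpha$. The decomposition gives $\tR_{n,W}(G)\ge P_n(\V a_n)\tP_{\V a_n,\V p}(G)$, and combining with Theorem~\ref{th:GenLDP} and $\log P_n(\V a_n)=O(n)$ yields
\[\liminf_{n\to\infty}\tfrac{1}{n^2}\log\tR_{n,W}(G)\ge-\inf_{y\in G}J_{\V\alpha,\V p}(y)\ge-J_{\V\alpha,\V p}(\T U)\ge-R_{\V p}(\T U)-\e.\]
Letting $\e\to 0$ and taking $\inf_{\T U\in G}$ completes this direction.

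For the LDP upper bound, since $P_n$ is a probability distribution, $\tR_{n,W}(F)\le\max_{\|\V a\|_1=n}\tP_{\V a,\V p}(F)$. Suppose for contradiction that $\limsup_{n\to\infty}n^{-2}\log\tR_{n,W}(F)\ge-\inf_F R_{\V p}+2\delta$ for some closed $F$ and $\delta>0$. Then along a subsequence $n_j\to\infty$ there exist $\V b_{n_j}$ with $\|\V b_{n_j}\|_1=n_j$ and $n_j^{-2}\log\tP_{\V b_{n_j},\V p}(F)\ge-\inf_F R_{\V p}+\delta$. Compactness of $\Delta_k$ lets us refine to $\V b_{n_j}/n_j\to\V\alpha^*$; extending to a full sequence $\V a_n$ with $\|\V a_n\|_1=n$, $\V a_{n_j}=\V b_{n_j}$, and $\V a_n/n\to\V\alpha^*$, Theorem~\ref{th:GenLDP} yields $\limsup_j n_j^{-2}\log\tP_{\V b_{n_j},\V p}(F)\le-\inf_F J_{\V\alpha^*,\V p}\le-\inf_F R_{\V p}$, using $R_{\V p}\le J_{\V\alpha^*,\V p}$ pointwise, which contradicts our assumption.
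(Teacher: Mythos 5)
Your treatment of the two LDP bounds is essentially the paper's argument in Section~\ref{ourLDP}: the same decomposition of $\tR_{n,W}$ by conditioning on the block-size vector $\V a$, the same $e^{-O(n)}=e^{o(n^2)}$ estimate on the multinomial weights for the lower bound, and the same subsequence-extraction and gap-filling device (so that Theorem~\ref{th:GenLDP} applies) for the upper bound; the paper phrases the latter directly rather than by contradiction, but the content is identical. That part is correct.

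The gap is in your proof that $R_{\V p}$ is lower semi-continuous. The reduction ``joint lsc of $(\V\alpha,\T U)\mapsto J_{\V\alpha,\V p}(\T U)$ on $\Delta_k\times\tW$ plus compactness of $\Delta_k$ implies lsc of the infimum'' is sound, but your proof of the joint lsc is not. You take approximate minimisers $V_n\in\T U_n$, ``extract a $\delta_\Box$-accumulation point $V\in\T U$'', and then compare $\int_{\cI{\V\alpha^{(n)}}{i}\times\cI{\V\alpha^{(n)}}{j}}h_{p_{i,j}}(V_n)$ with the corresponding integral for $V$. This comparison is not routine even for $p_{i,j}\in(0,1)$: convergence in $\delta_\Box$ gives no pointwise or $L^1$ control, and the lower semi-continuity of $U\mapsto\int h_p(U)$ along cut-metric convergence is precisely the non-trivial content of Chatterjee--Varadhan (Theorem~\ref{th:CV}). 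Worse, $\delta_\Box$-convergence is only up to measure-preserving rearrangements, which scramble the intervals $\cI{\V\alpha^{(n)}}{i}$, so one must pass to the limit in the graphon and in the partition simultaneously. This is exactly what the coloured-graphon machinery of Sections~\ref{ColGraphons}--\ref{Rate} is for: $(\tWk,\delta^{(k)}_\Box)$ is compact (Theorem~\ref{th:compact colorod graphon}), $I^{(k)}_{\V p}$ is lsc on it (Lemma~\ref{lm:lsc}, via the Lipschitz maps $\Gamma_{i,j}$ and Theorem~\ref{th:CV}), and $R_{\V p}(\T U)=\inf_{\Gamma^{-1}(\T U)}I^{(k)}_{\V p}$, so each level set $\{R_{\V p}\le\rho\}=\Gamma(\{I^{(k)}_{\V p}\le\rho\})$ is the continuous image of a compact set and hence closed (Corollary~\ref{cr:lsc}); note that the infimum over $\V\alpha$ is absorbed into dropping the constraint $\C A\in\Aalpha$, so no separate optimisation over the simplex is needed. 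Your one-sentence disposal of the case $p_{i,j}\in\{0,1\}$ is likewise only a heuristic. To complete your proposal you should either supply an argument of comparable depth or simply invoke Corollary~\ref{cr:lsc}, as the paper does.
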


For $k=1$, we recover the LDP result of Chatterjee and Varadhan~\cite{ChatterjeeVaradhan11} (that is, Theorem~\ref{th:CV}). 

Initially, we proved Theorem~\ref{th:ourLDP} independently of the work by Borgs et al~\cite{BCGPS}, by first proving an LDP for what we call \emph{$k$-coloured graphons} (that are defined in Section~\ref{ColGraphons}). Since our original proof of Theorem~\ref{th:ourLDP} is quite long and shares many common steps with the proof from~\cite{BCGPS} (with both being built upon the method of
Chatterjee and Varadhan~\cite{ChatterjeeVaradhan11}), we decided to derive Theorem~\ref{th:ourLDP} from the results in~\cite{BCGPS} with a rather short proof, also strengthening the LDP of Borgs et al~\cite{BCGPS} in the process.

\medskip This paper is organised as follows. In Section~\ref{prelim} we give further definitions (repeating some definitions from the Introduction) and provide some standard or easy results that we will need later. Section~\ref{ColGraphons} introduces $k$-coloured graphons and proves a compactness result. This result is used in Section~\ref{Rate} to prove that the functions $J_{\V a,\V p}$ and $R_{\V p}$ are lower semi-continuous. The large deviation principles stated in Theorems~\ref{th:GenLDP} and~\ref{th:ourLDP} are proved in Section~\ref{GenLDP} and~\ref{ourLDP} respectively.

\section{Preliminaries}\label{prelim}

Recall that the relative entropy $h_p$ was defined in~\eqref{eq:hp} and observe the conventions that $0\log(0)=0\log\left(\frac{0}{0}\right)=0$, $h_0(\rho)=+\infty$ whenever $\rho\not=0$ and $h_1(\rho)=+\infty$ whenever $\rho\not =1$.
The \emph{indicator function} $\I 1_X$ of a set $X$ assumes value $1$ for every $x\in X$ and 0 otherwise.

A measurable space $(\Omega,\C A)$ is called \emph{standard}
if there is a Polish topology on $\Omega$ whose Borel $\sigma$-algebra is equal to~$\C A$. Given a measure
$\mu$ on $(\Omega,\C A)$, we call a subset of $\Omega$ \emph{measurable} if it belongs to the \emph{completion} of $\C A$
by $\mu$, that is, the $\sigma$-algebra generated by $\C A$ and $\mu$-null sets.
We will usually omit $\sigma$-algebras from our notation.

Unless specified otherwise, the interval $[0,1]$ of reals is always equipped with the Lebesgue measure, denoted by~$\lambda$. By $\lambda^{\oplus k}$ we denote the completion of the $k$-th power of $\lambda$, that is, $\lambda^{\oplus k}$ is the Lebesgue measure on~$[0,1]^k$

Denote as $\Ak$ the set of all ordered partitions of $[0,1]$ into $k$ measurable sets.
For a non-zero vector $\V\alpha=(\alpha_1,\dots,\alpha_k)\in [0,\infty)^k$, we let $\Aalpha\subseteq\Ak$ to be the set of all ordered partitions of $[0,1]$ into $k$ measurable sets such that the $i$-th set has Lebesgue measure exactly~$\alpha_i/\|\V\alpha\|_1$. Recall that $(\cI{\V\alpha}{1},\dots,\cI{\V\alpha}{k})\in\Aalpha$ denotes the partition of $[0,1]$ into consecutive intervals whose lengths are given by~$\V\alpha/\|\V\alpha\|_1$  (where each dividing point is assigned  to e.g.\ its right interval for definiteness).

We will also need the following result. 
\hide{ (whose proof can be found in e.g.~\cite[Theorem 3.4.23]{Srivastava98cbs}).

\begin{theorem}[Isomorphism Theorem for Measure Spaces]\label{th:ISMS}
	For every two atomless standard probability spaces, 
	%$(\Omega,\mu)$ and $(\Omega',\mu')$ 
	there is a measure-preserving Borel isomorphism between them.
\end{theorem}	
}%

\begin{theorem}\label{th:ISMS}
	For every two atomless standard measure spaces 
$(\Omega,\mu)$ and $(\Omega',\mu')$, 
%with $0<\mu(\Omega)=\mu(\Omega')<\infty$, 
and Borel subsets $A\subseteq \Omega$ and $A'\subseteq \Omega'$ with $0<\mu(A)=\mu'(A')<\infty$,
% of the same positive measure,
	there is a measure-preserving Borel isomorphism between $A$ and~$A'$.
\end{theorem}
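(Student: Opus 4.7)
The plan is to reduce Theorem~\ref{th:ISMS} to the classical isomorphism theorem for atomless standard probability spaces --- the statement that any two such spaces admit a measure-preserving Borel isomorphism --- and then to outline how that classical statement itself is established.

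The reduction is a normalization-and-restriction step. Set $\nu:=\mu|_A/\mu(A)$ and $\nu':=\mu'|_{A'}/\mu'(A')$. Equipped with the trace Borel $\sigma$-algebras, $A$ and $A'$ become standard Borel spaces, because the restriction of a Polish topology to a Borel subset admits a finer Polish refinement with the same Borel sets. Atomlessness of $\mu$ together with $\mu(A)>0$ forces atomlessness of $\nu$, and the same holds for $\nu'$. Since $\mu(A)=\mu'(A')$, a measure-preserving Borel isomorphism between the normalized probability spaces is automatically measure preserving for the original restricted (unnormalized) measures, so it suffices to prove the classical statement.

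For that classical statement I would show that every atomless standard probability space $(X,\nu)$ is measure-preservingly Borel isomorphic to $([0,1],\lambda)$; composing the two such maps then yields the required isomorphism between $(A,\nu)$ and $(A',\nu')$. Recursively build a refining sequence of finite Borel partitions $\{X_s \mid s\in\{0,1\}^n\}_{n\in\I N}$ of $X$ with $\nu(X_s)=2^{-n}$ for every $s$, bisecting each piece at each stage --- a step made possible by atomlessness. Setting $\phi(x)$ to be the real with binary digits $s_1 s_2 \dots$ whenever $x\in\bigcap_{n\in\I N} X_{s_1\dots s_n}$ produces a Borel map with $\phi_*\nu=\lambda$, as one checks first on dyadic intervals and extends by a monotone class argument.

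The main obstacle is upgrading this ``dyadic coding'' map from an isomorphism modulo null sets to an honest Borel bijection. Restricted to a conull subset $X\setminus N$, the map $\phi$ is injective into $[0,1]\setminus N'$, where $N'$ is the countable set of dyadic rationals; the Lusin--Souslin theorem then guarantees that $\phi(X\setminus N)$ is Borel and that the inverse is Borel, and a careful bookkeeping in the partition construction ensures surjectivity onto $[0,1]\setminus N'$. It remains to match $N$ with $N'$ by a measure-preserving Borel bijection, which is trivial for the zero measures and reduces to finding any Borel bijection between two standard Borel spaces, provided by Kuratowski's theorem (after enlarging one side within a null set to match cardinality type, if necessary). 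Gluing the two bijections yields the desired measure-preserving Borel isomorphism, completing the classical case and hence the proof of Theorem~\ref{th:ISMS}.
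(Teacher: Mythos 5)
Your reduction is exactly the paper's proof: restrict to $A$ and $A'$ (noting Borel subsets of standard spaces are standard), normalize, and invoke the classical isomorphism theorem for atomless standard probability spaces, which the paper simply cites to Srivastava~\cite[Theorem 3.4.23]{Srivastava98cbs}. The only difference is that you additionally sketch the standard dyadic-coding proof of that cited classical theorem, and your sketch (including the Lusin--Souslin step and the matching of null leftovers) is correct.
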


\begin{proof} The case when $A=\Omega$ and $A'=\Omega'$ amounts to the Isomorphism Theorem for Measure Spaces, whose proof can be found in e.g.~\cite[Theorem 3.4.23]{Srivastava98cbs}. The general case follows by restricting everything to $A$ and $A'$, and noting that the obtained measure spaces are standard by e.g.\ \cite[Theorem 3.2.4]{Srivastava98cbs}.\end{proof}

\subsection{Graphons}\label{graphons}

A \emph{graphon} $U$ is a function  $U:[0,1]^2\to[0,1]$ which is \emph{symmetric} (that is,
$U(x,y)=U(y,x)$ for all $x,y\in [0,1]$) and \emph{measurable} (that is, for every $a\in \I R$, the level set $\{W\le a\}$ is a (Lebesgue) measurable subset of $[0,1]^2$). 
Recall that we denote the set of all graphons by~$\mathcal{W}$.
We define the \emph{cut-norm} $d_\Box:\C W^2\to [0,1]$ by 
 \beq{eq:CutNorm}
d_\Box(U,V):=\sup_{A,B\subseteq [0,1]} \left|\int_{A\times B}\left(U-V\right) \dd\lambda^{\oplus 2}\right|,\quad U,V\in \mathcal{W},
\eeq
where the supremum is taken over all pairs of measurable subsets of $[0,1]$.
For a function $\phi:[0,1]\to [0,1]$ and a graphon $U$, 
the \emph{pull-back} $U^\phi$ of $U$ along $\phi$ is defined 
by 
$$
 U^{\phi}(x,y):=U(\phi(x),\phi(y)),\quad x,y\in [0,1].
 $$
The \emph{cut-distance} $\delta_\Box:\C W^2\to [0,1]$ can be defined as
\beq{eq:CutDistance}
\delta_{\Box}(U,V):=\inf_{\phi,\psi} d_\Box (U^\phi,V^\psi),
\eeq
where the infimum is taken over all measure-preserving maps $\phi,\psi:[0,1]\to [0,1]$ (then the pull-backs $U^\phi$ and $V^\psi$ are necessarily measurable functions).
See \Lo{Section~8.2} for more details and, in particular, \Lo{Theorem 8.13} for some alternative definitions that give the same distance. It can be easily verified that $\delta_\Box$ is a pseudo-metric on~$\C W$.
Recall that we denote the metric quotient by $\tW$ and  the equivalence class of $U\in \mathcal{W}$ by~$\widetilde{U}$. For $U\in\C W$ and $\eta\ge 0$, we will denote the closed radius-$\eta$ ball around $\T U$ in 
%$\C W$ and 
$\tW$ by
$$
	\Sball(\widetilde U,\eta):= \left\{\T V\in \T{\C W}\mid \delta_\Box(U,V)\le\eta\right\}.
$$

\begin{remark} Without affecting $(\tW,\delta_\Box)$, we could have defined a graphon as a Borel symmetric function $[0,1]^2\to [0,1]$ and required that the measure-preserving maps in the definition of $\delta_\Box$ are Borel. Then some parts could be simplified (for example, the first claim of Lemma~\ref{lm:AlmostMP} would not be necessary as the function $U^\phi$ would be Borel for every Borel~$\phi$). However, 
		we prefer to use the (now standard) conventions from Lov\'asz' book~\cite{Lovasz:lngl}.			
\end{remark}

%We will need the following auxiliary results. One is the result of Lov\'asz and Szegedy~\cite[Theorem~5.1]{LovaszSzegedy07gafa} that the metric space $(\tW,\delta_\Box)$ is compact. Another one is as follows.
	
We will need the following auxiliary result.

\begin{lemma}\label{lm:AlmostMP}
	Let $U$ be a graphon and $\phi:[0,1]\to [0,1]$ be a measurable function such that the \emph{push-forward measure} $\phi_*\lambda$ (defined by $(\phi_*\lambda)(X):=\lambda(\phi^{-1}(X))$ for measurable $X\subseteq [0,1]$) satisfies $\phi_*\lambda\ll \lambda$, that is, 
 is absolutely continuous with respect to the Lebesgue measure~$\lambda$. Then  $U^\phi$ is a graphon. Moreover, 
	if the Radon-Nikodym derivative $D:=\frac{\dd (\phi_*\lambda)}{\dd \lambda}$ satisfies $D(x)\le 1+\e$ for a.e.\ 
	$x\in[0,1]$ then
	\beq{eq:AlmostMP}
	\delta_\Box(U,U^\phi)\le 2\e.
	\eeq
\end{lemma}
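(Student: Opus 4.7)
For the first claim that $U^\phi$ is a graphon, we replace $U$ and $\phi$ by Borel representatives $U_0$ and $\phi_0$, equal respectively to $U$ and $\phi$ away from $\lambda^{\oplus 2}$- and $\lambda$-null sets (such representatives exist because every Lebesgue measurable function agrees a.e.\ with a Borel one). Then $U_0\circ(\phi_0\times\phi_0)$ is Borel, and it agrees with $U^\phi$ off a $\lambda^{\oplus 2}$-null set: the piece where $\phi\ne\phi_0$ is null by construction, while $(\phi\times\phi)^{-1}(\{U\ne U_0\})$ is null because the hypothesis $\phi_*\lambda\ll\lambda$ forces $(\phi\times\phi)_*\lambda^{\oplus 2}=(\phi_*\lambda)^{\oplus 2}\ll\lambda^{\oplus 2}$. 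Thus $U^\phi$ is Lebesgue measurable; symmetry and the range $[0,1]$ are evident.

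For the cut-distance bound, set $\delta:=\int(1-D)^+\dd\lambda$. Because $\int D\dd\lambda=(\phi_*\lambda)([0,1])=1$, the positive and negative parts of $D-1$ integrate to the same value, so $\delta=\int(D-1)^+\dd\lambda\le\varepsilon$ using $D\le 1+\varepsilon$ a.e. The plan is to produce a measure-preserving Borel map $\psi:[0,1]\to[0,1]$ and an auxiliary Borel map $\phi':[0,1]\to[0,1]$ with $\delta_\Box(U^\phi,U^{\phi'})=0$, such that $\{\phi'\ne\psi\}$ has $\lambda$-measure at most $\delta$. Once this is arranged, $U^{\phi'}$ and $U^\psi$ disagree on a set of $\lambda^{\oplus 2}$-measure at most $2\delta$, so $d_\Box(U^{\phi'},U^\psi)\le 2\delta\le 2\varepsilon$; combined with $\delta_\Box(U,U^\psi)=0$, the triangle inequality yields $\delta_\Box(U,U^\phi)\le 2\varepsilon$.

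To build $\psi$ and $\phi'$ we pass to an auxiliary coordinate, sidestepping a fiberwise measurable selection. Consider the Borel set $G:=\{(x,t)\in[0,1]^2:D(\phi(x))\,t\le 1\}$ and the map $\tilde\phi(x,t):=\phi(x)$. A Fubini computation gives $\lambda^{\oplus 2}(G)=\int\min(D,1)\dd\lambda=1-\delta$ and $\tilde\phi_*(\lambda^{\oplus 2}|_G)=\min(D,1)\cdot\lambda$. Using Theorem~\ref{th:ISMS}, we extend $\tilde\phi|_G$ to a measure-preserving Borel map $\tilde\psi:[0,1]^2\to[0,1]$ by choosing, on $[0,1]^2\setminus G$ (a standard atomless space of mass $\delta$), any measure-preserving Borel bijection onto the Borel set $\{D<1\}$ equipped with the finite measure $(1-D)\dd\lambda$ (also of total mass $\delta$). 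Let $\beta:([0,1],\lambda)\to([0,1]^2,\lambda^{\oplus 2})$ be a measure-preserving Borel isomorphism (Theorem~\ref{th:ISMS} again) and set $\psi:=\tilde\psi\circ\beta$ and $\phi':=\tilde\phi\circ\beta=\phi\circ(\pi_1\circ\beta)$. Then $\psi$ is measure-preserving, $\phi'=\psi$ on $\beta^{-1}(G)$ (of $\lambda$-measure $1-\delta$), and $\delta_\Box(U^\phi,U^{\phi'})=0$ because $\pi_1\circ\beta$ is measure-preserving.

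The main obstacle is this construction of $\psi$: the excess mass of $\phi_*\lambda$ over $\{D>1\}$ must be rerouted to the deficit region $\{D<1\}$ in a Borel way. The auxiliary coordinate $t$ provides just enough extra randomness to carry this selection out globally and measurably, without having to invoke an explicit disintegration (Rokhlin) theorem.
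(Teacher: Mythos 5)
Your proof is correct. The first part (measurability of $U^\phi$) is essentially the paper's argument: both pass to Borel representatives and use that $\phi_*\lambda\ll\lambda$ forces $(\phi\times\phi)_*\lambda^{\oplus 2}=(\phi_*\lambda)^{\oplus 2}\ll\lambda^{\oplus 2}$. For the cut-distance bound your route is genuinely different in execution, though parallel in spirit. The paper blows up the \emph{target} side: it passes to generalised graphons, realises $U$ and $U^\phi$ as pull-backs of graphons on $([0,1]^2,\lambda^{\oplus 2})$ and on the region $\Omega=\{(x,y)\mid 0\le y\le D(x)\}$ respectively, and then applies Theorem~\ref{th:ISMS} to match $[0,1]^2\setminus\Omega$ with $\Omega\setminus[0,1]^2$ while keeping the identity on the overlap; the two generalised graphons then agree on a set of measure $(1-\e)^2\ge 1-2\e$. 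You instead blow up the \emph{source} side: the auxiliary coordinate $t$ splits each fibre of $\phi$ into a "retained" piece $G$ (carrying mass $\min(D,1)$ per target point) and an excess piece of total mass $\delta=\int(D-1)^+\dd\lambda\le\e$, which Theorem~\ref{th:ISMS} reroutes to the deficit region $\{D<1\}$; composing with an isomorphism $[0,1]\cong[0,1]^2$ yields an honest measure-preserving $\psi:[0,1]\to[0,1]$ agreeing with a reparametrisation of $\phi$ off a set of measure $\delta$. What your version buys is that it never leaves the class of ordinary graphons on $[0,1]$ (so the fact~\eqref{eq:51} about generalised graphons is not needed, only the trivial $\delta_\Box(V,V^\tau)=0$ for measure-preserving $\tau:[0,1]\to[0,1]$), and it gives the slightly sharper bound $2\delta\le 2\e$; the paper's version buys a shorter construction by leaning on the generalised-graphon machinery of \Lo{Chapter~13.1}. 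Two small points to tidy up: the set $G$ and the map $\tilde\phi$ are only Lebesgue measurable as written, so you should first replace $\phi$ by a Borel map equal to it $\lambda$-a.e.\ (this changes $U^\phi$ only on a null set) before invoking Theorem~\ref{th:ISMS}, exactly as the paper does; and the degenerate case $\delta=0$ (where Theorem~\ref{th:ISMS} does not apply but $\phi$ is already measure-preserving) should be dispatched separately, as the paper also does.
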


\begin{proof} 
		The function $U^\phi:[0,1]^2\to [0,1]$ is clearly symmetric so we have to show that it is measurable.
	Since the pre-image under $\phi$ of any $\lambda$-null set is again $\lambda$-null by our assumption $\phi_*\lambda\ll \lambda$, there is a Borel map  $\psi:[0,1]\to[0,1]$  such that the set 
	$$X:=\{x\in [0,1]:\psi(x)\not=\phi(x)\}$$ 
	is $\lambda$-null. (For a proof, see e.g.~\cite[Proposition~2.2.5]{Cohn13mt}.)
Take any $\rho\in\I R$. The set 
$$
 A:=\{(x,y)\in [0,1]^2\mid U(x,y)\le \rho\}
 $$ 
 is measurable so by e.g.~\cite[Proposition~1.5.2]{Cohn13mt} there are $B,N\subseteq [0,1]^2$ such that $B$ is Borel, $N$ is $\lambda^{\oplus 2}$-null and $A\bigtriangleup B\subseteq N$, where $A\bigtriangleup B:=(A\setminus B)\cup (B\setminus A)$ denotes the \emph{symmetric difference} of the sets $A$ and $B$. The pre-image of $N$ under the Borel map $\psi^{\oplus2}(x,y):=(\psi(x),\psi(y))$ is also $\lambda^{\oplus 2}$-null for otherwise
 this would contradict the absolute continuity $(\phi_*\lambda)^{\oplus 2}\ll \lambda^{\oplus 2}$ (which follows from
$\phi_*\lambda\ll \lambda$ by the Fubini-Tonelli Theorem for Complete Measures).
 Thus the level set 
 $\{U^\phi\le \rho\}$
 %$\{(x,y)\in [0,1]^2\mid U^\psi(x,y)\le \rho\}$ 
 is Lebesgue measurable since its symmetric difference with the Borel set $(\psi^{\oplus2})^{-1}(B)$ is a subset of the null set $(\psi^{\oplus2})^{-1}(N)\cup (X\times [0,1])\cup ([0,1]\times X)$. As $\rho\in\I R$ was arbitrary, $U^\phi$ is a measurable function and thus a graphon.

For the second part, it will be convenient to use the following generalisation of a graphon (which will not used anywhere else in this paper except this proof).
% that allows us to replace $U$ (resp.\ $U^\phi$) by a weakly isomorphic graphon where, informally speaking, each vertex $x\in [0,1]$ is replaced by the interval $[0,1]$ (resp.\ $[0,f(x)]$) of ``twins'' of~$x$.  
Namely, by
a \emph{generalised graphon} we mean a triple $(V,\Omega,\mu)$ where $(\Omega,\mu)$ is an atomless standard probability space  and $V:(\Omega^2,\mu^{\oplus 2})\to [0,1]$ is a symmetric measurable function. In the special case $(\Omega,\mu)=([0,1],\lambda)$ we get our notion of a graphon from the Introduction. Most definitions
and results extend with obvious modifications from graphons to generalised graphons  (see \Lo{Chapter~13.1} for details). %By the Isomorphism Theorem for Measure Spaces (see e.g.~\cite[Theorem 3.4.23]{Srivastava98cbs}), 
%Each generalised graphon is weakly isomorphic to a graphon (on $([0,1],\lambda)$), so we do not enlarge $\tW$ by adding generalised graphons.
In particular, we will need the facts that if $(V,\Omega,\mu)$ is a generalised graphon and
$\phi: (\Omega',\mu')\to(\Omega,\mu)$ is a measure-preserving  map between standard probability spaces,
then the function $V^\phi$ is measurable (which can be proved by adapting the proof of the first part of the lemma) and
% the appropriately defined	cut-distance $\delta_\Box$ satisfies
\beq{eq:51}
\delta_\Box((V,\Omega,\mu),(V^\phi, \Omega',\mu'))=0,
\eeq
where we define $V^\phi(x,y):=V(\phi(x),\phi(y))$ for $x,y\in \Omega'$ and $\delta_\Box$ is the extension of the cut-distance to generalised graphons via the obvious analogues of~\eqref{eq:CutNorm} and~\eqref{eq:CutDistance}.

Let us return to the proof of the second part of the lemma.
	We can assume that the set $\{x\in [0,1]\mid D(x)\not=1\}$ has positive measure for otherwise $\phi$ is a measure-preserving map and $\delta_\Box(U,U^\phi)=0$ by~\eqref{eq:51}, as required. 
	Let $(V,[0,1]^2,\lambda^{\oplus 2})$ be the generalised graphon defined by $V((x,y),(x',y')):=U(x,x')$, for $x,y,x',y'\in [0,1]$. Thus $V=U^\pi$, where
	$\pi:[0,1]^2\to [0,1]$ is the (measure-preserving) projection on the first coordinate. 
	By~\eqref{eq:51}, it holds that
	$$
	\delta_\Box ((V,[0,1]^2,\lambda^{\oplus 2}),U)=0.
	$$
	
	By changing $D$ on a $\lambda$-null set, we can make it a Borel function with $D(x)\le 1+\e$ for every $x\in [0,1]$. Then
	$$
	\Omega:=\{(x,y)\in [0,1]\times \I R\mid 0\le y\le D(x)\}
	$$ 
	is a Borel subset of $[0,1]\times \I R$ (see e.g.\ \cite[Example 5.3.1]{Cohn13mt}) and thus induces a standard measurable space. Let $\mu$ be the restriction of the Lebesgue measure on $\I R^2$ to~$\Omega$. Define $W:\Omega^2\to [0,1]$ by $W((x,y),(x',y')):=U(x,x')$ for $(x,y),(x',y')\in \Omega$. Thus $U^\phi$ and $(W,\Omega,\mu)$ are measure-preserving pull-backs of the generalised graphon $(U,[0,1],\phi_*\mu)$ 
along respectively the map $\phi$ and the projection $\Omega\to[0,1]$ on the first coordinate.
	 Therefore we have by~\eqref{eq:51} and the Triangle Inequality for $\delta_\Box$ that
	$$
\delta_\Box(U^\phi,(W,\Omega,\mu))\le \delta_\Box(U^\phi,(U,[0,1],\phi_*\mu))+\delta_\Box((U,[0,1],\phi_*\mu),(W,\Omega,\mu))=0.
$$  
	
	Thus it suffices to show that the cut-distance $\delta_\Box$ between $(V,[0,1]^2,\lambda^{\oplus 2})$ and $(W,\Omega,\mu)$ is at most~$2\e$.
	The functions $V$ and $W$ and the measures $\lambda^{\oplus 2}$ and $\mu$
	coincide on~$X^2$, where $X:=[0,1]^2\cap \Omega$. 
	Since $D\not= 1$ on a set of positive measure, it holds that $\mu(X)<1$. 
The Borel subsets $[0,1]^2\setminus X$ and $\Omega\setminus X$ of $\I R^2$ have the same positive Lebesgue measure and thus,
	%by the Isomorphism Theorem for Measure Spaces (Theorem~\ref{th:ISMS}), 
	by Theorem~\ref{th:ISMS}, there is a Borel measure-preserving bijection $\psi$ between them. By letting $\psi$ be the identity function on $X$, we get a Borel measure-preserving bijection $\psi:\Omega\to [0,1]^2$. 
	Since $D\le 1+\e$, we have that $\Omega\setminus X\subseteq [0,1]\times [1,1+\e]$ has measure at most~$\e$. 	
The function $W$ and the pull-back $V^\psi$, as maps $\Omega^2\to [0,1]$, coincide on the set $X^2$ of measure at least $(1-\e)^2\ge 1-2\e$. It follows that the $d_\Box$-distance between them at most $2\e$. (Indeed, when we compute it via the analogue of~\eqref{eq:CutNorm}, the integrand is bounded by 1 in absolute value and is non-zero on a set of measure at most~$2\e$.) This finishes the proof of the lemma.\end{proof}

Informally speaking, the following result states that if we delete a small subset of $[0,1]$ and stretch the rest of a graphon uniformly then the new graphon is close to the original one.

\begin{lemma}\label{lm:Delete01}
	Let $U\in \C W$ be a graphon, $s\in (0,1]$ be a non-zero real and $\phi:[0,1]\to [0,1]$ be the map that sends $x$ to~$sx$. 
	Then $U^\phi$ is a graphon and $\delta_\Box(U,U^\phi)\le 2(\frac1s -1)$.
\end{lemma}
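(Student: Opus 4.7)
The plan is to derive this as a direct corollary of Lemma~\ref{lm:AlmostMP}, so the task reduces to checking the two hypotheses of that lemma for the rescaling map $\phi(x)=sx$ and identifying the correct value of $\e$.

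First, I would verify that the push-forward measure $\phi_*\lambda$ is absolutely continuous with respect to~$\lambda$. For any measurable $X\subseteq [0,1]$ one has $\phi^{-1}(X)=\frac1s(X\cap [0,s])$, so
$$
(\phi_*\lambda)(X)=\lambda(\phi^{-1}(X))=\tfrac1s\,\lambda(X\cap [0,s]).
$$
In particular, if $\lambda(X)=0$ then $(\phi_*\lambda)(X)=0$, hence $\phi_*\lambda\ll\lambda$. This formula also identifies the Radon-Nikodym derivative as
$$
D(x)=\frac{\dd(\phi_*\lambda)}{\dd\lambda}(x)=\tfrac1s\cdot\I 1_{[0,s]}(x),\quad x\in [0,1].
$$

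Next, setting $\e:=\tfrac1s-1\ge 0$, I observe that $D(x)\le\tfrac1s=1+\e$ for every $x\in [0,1]$ (with equality on $[0,s]$ and $D=0\le 1+\e$ on $(s,1]$). Therefore Lemma~\ref{lm:AlmostMP} applies and yields that $U^\phi$ is a graphon together with the bound
$$
\delta_\Box(U,U^\phi)\le 2\e=2\bigl(\tfrac1s-1\bigr),
$$
which is exactly the claim. There is no substantial obstacle here; the only routine point to be careful about is the trivial case $s=1$, in which $\phi$ is the identity, $\e=0$, and the bound holds with equality $0$.
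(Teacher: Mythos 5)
Your proposal is correct and follows exactly the paper's own argument: compute the push-forward of $\lambda$ under $x\mapsto sx$, identify the Radon--Nikodym derivative as $\tfrac1s$ on $[0,s]$ and $0$ elsewhere, and apply Lemma~\ref{lm:AlmostMP} with $\e=\tfrac1s-1$. The extra details you supply (the explicit formula for $\phi^{-1}(X)$ and the remark on $s=1$) are harmless elaborations of the same proof.
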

\begin{proof}  Clearly, the push-forward $\phi_*\lambda$ is the uniform probability measure on $[0,s]$ so the Radon-Nikodym derivative $\frac{\dd( \phi_*\lambda)}{\dd\lambda}$ is a.e.\ $1/s$ on~$[0,s]$ and $0$ on~$[s,1]$. The result now follows from Lemma~\ref{lm:AlmostMP}. 
\end{proof}

\subsection{Large deviations for compact metric spaces}\label{LDP}

Recall that the definition of a large deviation principle was given in Definition~\ref{df:LDP}. Since we will be dealing with LDPs for compact metric spaces only, we may use the following alternative characterisation that follows from \DZ{Theorems 4.1.11 and 4.1.18} (see also~\RS{Exercise~2.24}). 

\begin{lemma}\label{lm:LDP}
	Let $(X,d)$ be  a compact metric space, $s:\I N\to (0,\infty)$ satisfy $s(n)\to\infty$, and $I:X\to [0,\infty]$ 
	be a lower semi-continuous function on~$(X,d)$. Then
	a sequence of Borel probability measures $(\mu_n)_{n\in \I N}$ on $(X,d)$ satisfies an LDP 
	with speed $s$
	and rate function $I$
	if and only 
	\begin{eqnarray}
		\lim_{\eta\to 0}\liminf_{n\to\infty} \frac1{s(n)}\,{\log\Big(\mu_n\big(
			\{y\in X\mid d(x,y)\le \eta\}
			\big)\Big)} 
		&\ge& -I(x),\quad\mbox{for every $x\in X$,}\label{eq:lower}\\
		\lim_{\eta\to 0}\limsup_{n\to\infty} \frac1{s(n)}\,{ \log\Big(\mu_n\big(
			\{y\in X\mid d(x,y)\le \eta\}
			\big)\Big)} &\le & -I(x),\quad\mbox{for every $x\in X$}.\label{eq:upper}
	\end{eqnarray}
\end{lemma}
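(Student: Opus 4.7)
The plan is to verify the equivalence in two directions. \textbf{Forward direction.} Assume the LDP with rate function $I$. Writing $\bar B(x,\eta) := \{y \in X : d(x,y) \le \eta\}$ for the closed ball, the lower semi-continuity of $I$ together with the fact that $x \in \bar B(x,\eta)$ gives
\[
\lim_{\eta \to 0^+} \inf_{y \in \bar B(x,\eta)} I(y) = I(x).
\]
To obtain \eqref{eq:lower}, I would apply the open-set bound \eqref{eq:lowerGen} to the open ball $\{y : d(x,y) < \eta\}$, which is contained in $\bar B(x,\eta)$ and contains $x$; this yields $\liminf_n \frac{1}{s(n)} \log \mu_n(\bar B(x,\eta)) \ge -I(x)$ for every $\eta > 0$, which survives the limit. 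For \eqref{eq:upper}, I would apply \eqref{eq:upperGen} to the closed set $\bar B(x,\eta)$ and send $\eta \to 0^+$, using the identity above.

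\textbf{Converse, lower bound.} Assume \eqref{eq:lower}. For any open $G \subseteq X$ and any $x \in G$, choose $\eta_x > 0$ with $\bar B(x,\eta_x) \subseteq G$. Since $\mu_n(\bar B(x,\eta))$ is non-decreasing in $\eta$, the map $\eta \mapsto \liminf_n \frac{1}{s(n)} \log \mu_n(\bar B(x,\eta))$ is non-decreasing, and so \eqref{eq:lower} forces it to be $\ge -I(x)$ for \emph{every} $\eta > 0$. In particular,
\[
\liminf_n \tfrac{1}{s(n)} \log \mu_n(G) \ge \liminf_n \tfrac{1}{s(n)} \log \mu_n(\bar B(x,\eta_x)) \ge -I(x),
\]
and taking the supremum over $x \in G$ yields \eqref{eq:lowerGen}.

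\textbf{Converse, upper bound.} Assume \eqref{eq:upper}. Here compactness is essential. Fix a closed (hence compact) $F \subseteq X$ and $\varepsilon > 0$. For each $x \in F$ use \eqref{eq:upper} to choose $\eta_x > 0$ so that
\[
\limsup_n \tfrac{1}{s(n)} \log \mu_n(\bar B(x,\eta_x)) \le -\min\bigl(I(x),\, 1/\varepsilon\bigr) + \varepsilon.
\]
The open balls $\{y : d(x,y) < \eta_x\}$, $x \in F$, cover $F$, so by compactness there is a finite subcover indexed by $x_1,\ldots,x_N$. Then $\mu_n(F) \le \sum_{i=1}^N \mu_n(\bar B(x_i,\eta_{x_i}))$, and the standard identity $\limsup_n \frac{1}{s(n)} \log \bigl(\sum_i a_n^{(i)}\bigr) = \max_i \limsup_n \frac{1}{s(n)} \log a_n^{(i)}$ for finitely many non-negative sequences gives
\[
\limsup_n \tfrac{1}{s(n)} \log \mu_n(F) \le -\min\bigl(\inf_{x \in F} I(x),\, 1/\varepsilon\bigr) + \varepsilon.
\]
Sending $\varepsilon \to 0^+$ yields \eqref{eq:upperGen}. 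The only real subtlety I anticipate is gracefully handling points where $I(x) = +\infty$; the truncation by $1/\varepsilon$ above absorbs this uniformly. Everything else is standard finite-cover and $\liminf/\limsup$ bookkeeping, with compactness of $X$ used only in this last paragraph to pass from a local to a global upper bound.
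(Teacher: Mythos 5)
Your argument is correct and complete. The paper does not prove this lemma at all---it simply cites \cite{DemboZeitouni10ldta}*{Theorems 4.1.11 and 4.1.18} and \cite{RassoulaghaSeppelainen14cldigm}*{Exercise 2.24}---and what you have written is essentially the standard textbook derivation hiding behind those citations: the forward direction by sandwiching closed balls between open balls and using $\lim_{\eta\to 0}\inf_{\bar B(x,\eta)}I=I(x)$ (which is exactly where lower semi-continuity of $I$ is needed), the lower bound of the converse by monotonicity in $\eta$ plus inclusion of a ball in $G$, and the upper bound of the converse by a finite subcover of the compact set $F$ together with the fact that $\frac{\log N}{s(n)}\to 0$. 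Your truncation by $1/\varepsilon$ is the right device for points with $I(x)=+\infty$; the only (trivial) case you leave implicit is $F=\emptyset$, where both sides of \eqref{eq:upperGen} are $-\infty$. Compactness of $X$ is indeed used only in the last step, and $s(n)\to\infty$ only in absorbing the factor $N$ from the union bound.
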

	
	In fact, under the assumptions of Lemma~\ref{lm:LDP}, the bounds~\eqref{eq:lowerGen} and~\eqref{eq:lower} (resp.\ \eqref{eq:upperGen} and~\eqref{eq:upper}) are equivalent to each other. So we will also refer to 
	\eqref{eq:lower} and~\eqref{eq:upper}
	as the \emph{lower bound} and the \emph{upper bound} respectively.

\section{Coloured graphons}\label{ColGraphons}

The definitions and results of this section are needed in order to establish the lower semi-continuity of the functions $J_{\V \alpha,\V p}$ and~$R_{\V p}$.

Fix $k\in \mathbb{N}$.
By a \emph{$k$-coloured graphon} we mean a pair $(W,\mathcal{A})$ where $W\in \mathcal{W}$ and $\mathcal{A}\in \Ak$. (One can view the partition $\mathcal{A}$ as a $k$-colouring of $[0,1]$.)
Write $\Wk:=\C W\times \Ak$ for the space of all $k$-coloured graphons.
We define the pseudo-metric $d^{(k)}_\Box$ (the analogue of the cut norm $d_\Box$) on $\Wk$ as
\begin{equation*}
d^{(k)}_\Box((U,\mathcal{A}),(V,\mathcal{B})):=  
\sup_{C,D\subseteq [0,1]} \sum_{i,j\in [k]}\left| \int_{C\times D} ({\I 1}_{A_i\times A_j}U-{\I 1}_{B_i\times B_j}V) \dd\lambda^{\oplus 2}\right|
 + \sum_{i\in [k]}\lambda(A_i\triangle B_i),
\end{equation*}
 for $(U,\mathcal{A}),(V,\mathcal{B})\in \Wk$, where $\C A=(A_1,\dots,A_k)$ and $\C B=(B_1,\dots,B_k)$.
 Informally speaking, two $k$-coloured graphons are close to each other in $d^{(k)}$ if
 they have similar distributions of coloured edges across cuts, where an edge is coloured by the colours of its endpoints. The second term is added so that e.g.\ we can distinguish two constant-0 graphons with different part measures.
 
The \emph{cut distance} for coloured graphons is then defined as
\beq{eq:DeltaK}
\delta^{(k)}_\Box((U,\mathcal{A}),(V,\mathcal{B})):=\inf_{\phi,\psi} d^{(k)}_\Box((U,\mathcal{A})^\phi,(V,\mathcal{B})^\psi),
\eeq
where the infimum is taken over measure-preserving maps $\phi,\psi:[0,1]\to [0,1]$ and we denote
$(U,\mathcal{A})^\phi:=(U^\phi,\C A^\phi)$ and $\C A^\phi:=(\phi^{-1}(A_1),\dots,\phi^{-1}(A_k))$ 
with $(A_1,\dots,A_k)$ being the parts of~$\C A$.
As in the graphon case (compare with e.g.\ \Lo{Theorem 8.13}), some other definitions give the same distance (e.g.\ it is enough to take the identity function for $\psi$). We chose this definition as it is immediately clear from it that the function $\delta^{(k)}_\Box$ is symmetric and defines a pseudo-metric.
We write $\tWk$ for the corresponding quotient, where we identify two $k$-coloured graphons 
at $\delta^{(k)}_\Box$-distance~0.

\begin{theorem}\label{th:compact colorod graphon}
The metric space $(\tWk,\delta^{(k)}_\Box)$ is compact. 
\end{theorem}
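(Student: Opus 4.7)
The plan is to prove sequential compactness of $(\tWk,\delta^{(k)}_\Box)$ by combining the classical Lov\'asz--Szegedy compactness of $(\tW,\delta_\Box)$ with a weak-$*$ compactness argument for the partition indicators, and then using a splitting construction to realize the resulting ``fractional'' limit as a genuine element of $\Wk$.

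First, given a sequence $((U_n,\C A_n))_{n\in\I N}\subset \Wk$ with $\C A_n=(A_{n,1},\dots,A_{n,k})$, I would use sequential compactness of $[0,1]^k$ to pass to a subsequence along which $\lambda(A_{n,i})\to\alpha_i$ for each $i\in [k]$, with $\sum_i\alpha_i=1$. Next, applying the classical compactness of $(\tW,\delta_\Box)$ to the underlying graphons, I extract a further subsequence and measure-preserving maps $\phi_n:[0,1]\to[0,1]$ such that $d_\Box(U_n^{\phi_n},U)\to 0$ for some $U\in\C W$. Since $(U_n,\C A_n)$ and $(U_n^{\phi_n},\C A_n^{\phi_n})$ are in the same class in $\tWk$, I may assume $U_n\to U$ in $d_\Box$ directly. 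Finally, since each indicator $\chi_{n,i}:=\I 1_{A_{n,i}}$ lies in the unit ball of $L^\infty([0,1])$, the Banach--Alaoglu theorem and a diagonal argument yield a subsequence along which $\chi_{n,i}\to\chi_i$ weakly-$*$ in $L^\infty$ for every $i\in[k]$, with measurable limits $\chi_i:[0,1]\to[0,1]$ satisfying $\sum_i\chi_i\equiv 1$.

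Because the $\chi_i$ will in general not be indicator functions, I build an honest partition via a splitting construction. Let $\Omega:=[0,1]\times[0,1]$ carry Lebesgue measure, and set
\[
\hat U((x,s),(y,t)):=U(x,y), \qquad \hat A_i:=\Big\{(x,s)\in\Omega : \textstyle\sum_{j<i}\chi_j(x)\le s<\sum_{j\le i}\chi_j(x)\Big\}.
\]
By Theorem~\ref{th:ISMS}, $\Omega$ is measure-isomorphic to $([0,1],\lambda)$; pulling $\hat U$ and $\hat{\C A}=(\hat A_1,\dots,\hat A_k)$ back through any such isomorphism produces a candidate limit $(\hat U,\hat{\C A})\in\Wk$.

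The technical heart is to verify $(U_n,\C A_n)\to(\hat U,\hat{\C A})$ in $\delta^{(k)}_\Box$ by exhibiting measure-preserving maps $\sigma_n,\tau_n:[0,1]\to[0,1]$ for which both terms of $d^{(k)}_\Box((U_n,\C A_n)^{\sigma_n},(\hat U,\hat{\C A})^{\tau_n})$ vanish. The cut-norm term involving coloured edges can be controlled by combining $d_\Box$-convergence $U_n\to U$ with weak-$*$ convergence of the $\chi_{n,i}$ when integrated against indicators of measurable test sets. The main obstacle---and the technical core of the argument---is the symmetric-difference term $\sum_i\lambda(A_i\triangle B_i)$: weak-$*$ convergence of the $\chi_{n,i}$ does \emph{not} imply convergence in symmetric difference, so I must cleverly choose $\sigma_n$ and $\tau_n$ to rearrange the $A_{n,i}$ into near-alignment with the $\hat A_i$, effectively using the auxiliary second coordinate of $\Omega$ to absorb the ``fractional'' nature of the limits $\chi_i$. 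I would accomplish this by first approximating each $\chi_i$ by a step function on a sufficiently fine partition of $[0,1]$, and then, on each piece, invoking Theorem~\ref{th:ISMS} to realize the required measure-preserving rearrangements.
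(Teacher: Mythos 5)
Your architecture is genuinely different from the paper's. The paper does not black-box the compactness of $(\tW,\delta_\Box)$: it re-runs the Lov\'asz--Szegedy weak-regularity/martingale argument from scratch, seeding the regularity iteration with the colour partition $\C A_n$ itself (normalised to intervals), so that every approximating step partition refines the colour classes and the alignment problem you wrestle with never arises. You instead invoke compactness of $(\tW,\delta_\Box)$ together with Banach--Alaoglu for the indicators and then must reconcile the two limits by hand; the splitting construction on $\Omega=[0,1]^2$ is indeed the right device for realizing the fractional limits $\chi_i$ as a genuine partition. This route is viable, but your sketch attributes the main difficulty to the wrong term and leaves a real gap.

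The gap is in the claim that the coloured cut-norm term ``can be controlled by combining $d_\Box$-convergence $U_n\to U$ with weak-$*$ convergence of the $\chi_{n,i}$ when integrated against indicators of measurable test sets'', with only the symmetric-difference term requiring the rearrangements $\sigma_n,\tau_n$. Weak-$*$ convergence gives convergence against each \emph{fixed} test set, but the coloured cut norm is a supremum over test sets and no uniformity is available: with $U_n\equiv 1$, $k=2$ and $A_{n,1}$ a union of $n$ equally spaced intervals of total measure $1/2$, choosing $C=D=A_{n,1}$ shows that the coloured cut-norm discrepancy between $(U_n,\C A_n)$ and your candidate limit stays bounded away from $0$ unless the measure-preserving maps are chosen to align $A_{n,1}$ with $\hat A_1$. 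So the colour-aligning rearrangement is essential for \emph{both} terms, and once you apply it a second issue appears that your sketch does not address: after rearranging, $U_n^{\sigma_n}$ need no longer be close in cut norm to the transported $\hat U^{\tau_n}$, since an arbitrary measure-preserving rearrangement of $[0,1]$ can destroy cut-norm closeness of the underlying graphons. The repair is to choose the finite partition $(Q_1,\dots,Q_M)$ on which you perform the piecewise ISMS rearrangements to be fine enough not only for the $\chi_i$ but also for $U$ itself, say a dyadic interval partition with $\|U-\mathbb E[U\mid \C Q\times\C Q]\|_1\le\e$, and to rearrange only within cells (sending $Q_m\times[0,1]$ onto $Q_m$); then the graphon parts of both sides agree after stepping on $\C Q$, and the coloured cut-norm term is bounded by $\|U_n-U\|_\Box$ plus $O(\e)$ plus the colour-misalignment error, which vanishes for fixed $\C Q$ as $n\to\infty$ by weak-$*$ convergence applied to the finitely many sets $Q_m$. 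With these two points made explicit your proof goes through; as written, the key verification rests on a mechanism that fails.
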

\begin{proof}
The proof is obtained by the obvious adaptation of the proof of Lov\'asz and Szegedy~\cite[Theorem~5.1]{LovaszSzegedy07gafa} (see also~\cite[Theorem 9.23]{Lovasz:lngl}) that the space $(\tW,\delta_\Box)$ is compact.

Let $(W_n,\mathcal A_n)_{n\in\mathbb N}$ be an arbitrary sequence of elements of~$\Wk$. We have to find a subsequence 
that converges to some element of $\Wk$ with respect to~$\delta_\Box^{(k)}$. 

When dealing with the elements of $\Wk$,
%measurable partitions, 
we can ignore null subsets of $[0,1]$; thus all relevant statements, e.g.\ that one partition refines another, are meant to hold almost everywhere.

For $n\in\mathbb N$, let the parts of $\mathcal A_n$ be $(A_{n,1},\dots,A_{n,k})$ and, by applying a measure-preserving bijection to $(W_n,\mathcal A_n)$, assume by Theorem~\ref{th:ISMS} that the colour classes $A_{n,1},\dots,A_{n,k}\subseteq [0,1]$ are all intervals, 
%(where a dividing point can be assigned to either of the two intervals), 
coming in this order. By passing to a subsequence, assume that, for each $i\in [k]$, the length of $A_{n,i}$ converges to some $\alpha_i\in [0,1]$ as $n\to\infty$. With $\V\alpha:=(\alpha_1,\dots,\alpha_k)$, this gives rise to the ``limiting'' partition 
$$
\mathcal A
:=(\cI{\V\alpha}{1},\dots,\cI{\V\alpha}{k})
\in\Aalpha
$$ of $[0,1]$ into intervals.

Let $m_1:=k$ and inductively for $\ell=2,3,\ldots$ let $m_\ell$ be sufficiently large such that for every graphon $W$ and a measurable partition $\mathcal A'$ of $[0,1]$ with $|\mathcal A'|\le m_{\ell-1}$ there is a measurable partition $\mathcal P=(P_1,\dots,P_m)$ of $[0,1]$ refining $\mathcal A'$ such that $m\le m_\ell$ and $d_{\Box}(W,W_{\mathcal P})\le 1/\ell$. Here $W_{\mathcal P}$ denotes the projection of $W$ to the space of $\C P$-step graphons; namely, for every $i,j\in [m]$ with $P_i\times P_j$ non-null in $\lambda^{\oplus 2}$, $W_{\C P}$ assumes the constant value $\frac1{\lambda(P_i)\lambda(P_j)}\int_{P_i\times P_j} W\dd\lambda^{\oplus 2}$ on $P_i\times P_j$ (and, say, $W_{\C P}$ is defined to be 0 on all $\lambda^{\oplus 2}$-null products $P_i\times P_j$).
Such a number $m_\ell$
exists by~\cite[Lemma 9.15]{Lovasz:lngl}, a version of the Weak Regularity Lemma for graphons.
 
For each $n\in\mathbb N$, we do the following. Let $\mathcal P_{n,1}:=\mathcal A_n$ and, inductively on $\ell=2,3,\dots$, let $\mathcal P_{n,\ell}$ be the partition with at most $m_{\ell}$ parts obtained by applying the above Weak Regularity Lemma to $(W_n,\mathcal P_{n,\ell-1})$. By adding empty parts to $\mathcal P_{n,\ell}$, for each $\ell\ge 1$, we can assume that it has the same number of parts (namely, $m_\ell$) of each colour, that is, we can denote its parts as $(P_{n,\ell,i,j})_{i\in [k], j\in [m_\ell]}$, so that $P_{n,\ell,i,j}\subseteq A_{n,i}$ for all $(i,j)\in [k]\times [m_\ell]$.
Also, define $W_{n,\ell}:=(W_n)_{\mathcal P_{n,\ell}}$ to be the projection of the graphon $W_n$ on the space of $\mathcal P_{n,\ell}$-step graphons.

Then, iteratively for $\ell=2,3,\dots$, repeat the following. Find a measure-preserving bijection $\phi:[0,1]\to[0,1]$ such that $(\mathcal P_{n,\ell})^\phi$ is a partition into intervals and $\phi$ preserves the previous partitions
$\mathcal P_{n,1},\dots,\mathcal P_{n,\ell-1}$  (each of which is a partition into intervals by induction on~$\ell$). Then, for each $m\ge \ell$, replace $(W_{n,m},\mathcal P_{n,m})$ by $(W_{n,m},\mathcal P_{n,m})^\phi$. When we are done with this step, the following properties hold for each integer~$\ell\ge 2$:
\begin{itemize}
	\item\label{it:P1} $\delta_\Box^{(k)}((W_{n,\ell},\mathcal A_n),(W_n,\mathcal A_n))\le 1/\ell$;
	\item\label{it:P3} %If $\ell\ge 2$ then 
The partition $\mathcal P_{n,\ell}$ refines $\mathcal P_{n,\ell-1}$ (and, inductively, also refines $\mathcal A_{n}=\mathcal P_{n,1}$);
	\item\label{it:P2} $|\mathcal P_{n,\ell}|= k m_\ell$ with exactly $m_\ell$ parts assigned to each colour class of $\mathcal A_n$.
\end{itemize}

Next, iteratively for $\ell=1,2,\dots$, we pass to a subsequence of $n$ so that for every $(i,j)\in [k]\times [m_\ell]$, the length of the interval $P_{n,\ell,i,j}$ converges, and for every pair $(i,j),(i',j')\in [k]\times [m_\ell]$, the common value of the step-graphon $W_{n,\ell}$ on $P_{n,\ell,i,j}\times P_{n,\ell,i',j'}$ converges.
% (where, for definiteness, the value is defined to be e.g.\ 0 if the product is empty). 
	It follows that the sequence $W_{n,\ell}$ converges pointwise to some graphon $U_\ell$ which is itself a step-function with $km_\ell$ parts that are intervals. We use diagonalisation to find a subsequence of $n$ so that, for each $\ell\in\mathbb N$, $W_{n,\ell}$ converges to some step-graphon $U_\ell$ a.e.\ as $n\to\infty$, with the step partition $\mathcal P_\ell$ of  $U_\ell$  consisting of $km_\ell$ intervals and refining the partition~$\C A$. 
\hide{Moreover, 	we can denote the parts of $\mathcal P_\ell$ as
	$(P_{\ell,i,j})_{i\in[k], m\in[m_\ell]}$, so that $P_{\ell,i,1},\dots,P_{\ell,i,m_\ell}$ form a partition of~$\cI{\V\alpha}{i}$ for each $i\in [k]$.
}

It follows that, for all $s<t$ in $\I N$, the partition $\mathcal P_{t}$ is a refinement of $\mathcal P_s$ and, moreover,  $U_s$ is the conditional expectation $\mathbb E[U_t|\mathcal P_s]$ a.e. 
As observed in the proof of Lov\'asz and Szegedy~\cite[Theorem~5.1]{LovaszSzegedy07gafa},
this (and $0\le U_t\le 1$ a.e.) implies by the Martingale Convergence Theorem that $U_\ell$ converge a.e.\ to some graphon $U$ as $\ell\to\infty$. By the Dominated Convergence Theorem, $U_\ell\to U$ also in the $L^1$-distance.

We claim that $\delta_\Box((W_n,\mathcal A_n),(U,\mathcal A))\to 0$ as $n\to\infty$ (after we passed to the subsequence defined as above). Take any $\e>0$. Fix an integer $\ell>4/\e$ such that $\|U-U_\ell\|_1\le \e/4$. Given $\ell$, fix $n_0$ such that  for all $n\ge n_0$ we have $\|U_\ell-W_{n,\ell}\|_1\le \e/4$ and $\sum_{i=1}^k \lambda(\cI{\V\alpha}{i}\bigtriangleup A_{n,i})\le \e/4$. Then, for every $n\ge n_0$ we have
\begin{eqnarray*}
\delta_\Box^{(k)}((U,\mathcal A),(W_n,\mathcal A_n))&\le &
d_\Box^{(k)}((U,\mathcal A),(U_\ell,\mathcal A))
+ d_\Box^{(k)}((U_\ell,\mathcal A),(W_{n,\ell},\mathcal A_n))\\
&+& \delta_\Box^{(k)}((W_{n,\ell},\mathcal A_n),(W_n,\mathcal A_n))\\
&\le& \|U-U_\ell\|_1 + \|U_\ell-W_{n,\ell}\|_1 + \sum_{i=1}^k \lambda(\cI{\V\alpha}{i}\bigtriangleup A_{n,i}) + 1/\ell\\
&\le& \e/4+\e/4+\e/4+\e/4\ =\ \e.
\end{eqnarray*}
 Since $\e>0$ was arbitrary, the claim is proved. Thus the metric space $(\tWk,\delta^{(k)}_\Box)$ is indeed compact.
\end{proof}

\section{The lower semi-continuity of $J_{\V\alpha,\V p}$ and $R_{\V p}$}\label{Rate}

For this section we fix an integer $k\ge 1$, a symmetric $k\times k$ matrix $\V p=(p_{i,j})_{i,j\in [k]}\in [0,1]^{k\times k}$ and a non-zero real vector $\V\alpha\in [0,\infty)^k\NoZero$.
We show that the functions $J_{\V\alpha,\V p}$ and  $R_{\V p}$
are lower semi-continuous functions from $(\tW,\delta_\Box)$ to $[0,+\infty]$.

Let $\Gamma:\Wk\to \C W$ be the map that forgets the colouring, i.e., $\Gamma(U,\mathcal{A}):=U$
for $(U,\C A)\in \Wk$.
For $i,j\in [k]$, let the map $\Gamma_{i,j}:\Wk\to \C W$ send $(U,(A_1,\dots,A_k))\in \Wk$  to the graphon $V$ defined as
$$
%(\Gamma_{i,j}(U,(A_1,\dots,A_k)))
V(x,y):=\left\{\begin{array}{ll} 
	U(x,y),& (x,y)\in (A_i\times A_j)\cup (A_j\times A_i),\\
	p_{i,j},& \mbox{otherwise,}
\end{array}
\right.\quad \mbox{for $x,y\in [0,1]$.}
$$

\begin{lemma}\label{lm:Lipschitz}
	The maps $\Gamma$ and $\Gamma_{i,j}$, for ${i,j\in [k]}$, are $1$-Lipschitz maps from $(\Wk,d_\Box^{(k)})$ to $(\C W,d_\Box)$.
\end{lemma}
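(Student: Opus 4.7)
The proofs of both claims are by direct computation from the definitions of $d_\Box$ and $d_\Box^{(k)}$.

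For $\Gamma$: since $\mathcal{A}=(A_1,\dots,A_k)$ and $\mathcal{A}'=(A_1',\dots,A_k')$ are partitions of $[0,1]$, we have $U=\sum_{a,b\in[k]}\I 1_{A_a\times A_b}\,U$ and $U'=\sum_{a,b\in[k]}\I 1_{A_a'\times A_b'}\,U'$ a.e. For any measurable $C,D\subseteq[0,1]$, the triangle inequality yields
\[
\left|\int_{C\times D}(U-U')\,\dd\lambda^{\oplus 2}\right|\le \sum_{a,b\in[k]}\left|\int_{C\times D}\bigl(\I 1_{A_a\times A_b}U-\I 1_{A_a'\times A_b'}U'\bigr)\,\dd\lambda^{\oplus 2}\right|,
\]
and the right-hand side is bounded by the supremum part of $d_\Box^{(k)}((U,\mathcal{A}),(U',\mathcal{A}'))$. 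Taking the supremum over $C,D$ gives $d_\Box(U,U')\le d_\Box^{(k)}((U,\mathcal{A}),(U',\mathcal{A}'))$.

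For $\Gamma_{i,j}$: write $V=\Gamma_{i,j}(U,\mathcal{A})$, $V'=\Gamma_{i,j}(U',\mathcal{A}')$, $S=(A_i\times A_j)\cup(A_j\times A_i)$, and $S'=(A_i'\times A_j')\cup(A_j'\times A_i')$. Since $V=p_{i,j}+\I 1_S(U-p_{i,j})$, and similarly for $V'$, the constants $p_{i,j}$ cancel and a short calculation yields the identity
\[
V-V'=(\I 1_S-\I 1_{S'})(U-p_{i,j})+\I 1_{S'}(U-U').
\]
Integrating over $C\times D$ and applying the triangle inequality splits $\bigl|\int_{C\times D}(V-V')\bigr|$ into a ``partition-change'' piece, bounded by $\lambda^{\oplus 2}(S\triangle S')$ since $|U-p_{i,j}|\le 1$, and a ``graphon-change'' piece $\bigl|\int_{(C\times D)\cap S'}(U-U')\bigr|$. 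For the latter I would decompose $(C\times D)\cap S'$ into the two pieces $(C\cap A_a')\times(D\cap A_b')$ with $(a,b)\in\{(i,j),(j,i)\}$ (a single piece if $i=j$) and in each contribution add and subtract $\int\I 1_{A_a\times A_b}U$. This matches the graphon-change contribution to the corresponding summands $\bigl|\int(\I 1_{A_a\times A_b}U-\I 1_{A_a'\times A_b'}U')\bigr|$ of the $d_\Box^{(k)}$-supremum, up to extra terms of the form $\lambda^{\oplus 2}((A_a\times A_b)\triangle(A_a'\times A_b'))\le \lambda(A_a\triangle A_a')+\lambda(A_b\triangle A_b')$.

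The main obstacle is the bookkeeping needed to show that all partition-change contributions, once collected, fit inside the single budget $\sum_\ell\lambda(A_\ell\triangle A_\ell')$ of $d_\Box^{(k)}$. Here one crucially uses that whenever $x\in A_\ell\triangle A_\ell'$ the partitions disagree at $x$, so $x$ also lies in $A_{\ell'}\triangle A_{\ell'}'$ for some $\ell'\ne\ell$; equivalently $\sum_\ell\lambda(A_\ell\triangle A_\ell')=2\lambda(\{x:\sigma(x)\ne\sigma'(x)\})$, where $\sigma,\sigma'$ denote the part-assignment functions of $\mathcal{A},\mathcal{A}'$. This built-in factor of two, together with $p_{i,j}\le 1$, provides precisely enough slack to absorb the various symmetric-difference terms into $\sum_\ell\lambda(A_\ell\triangle A_\ell')$, after which taking the supremum over $C,D$ completes the argument.
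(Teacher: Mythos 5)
Your treatment of $\Gamma$ is correct and is exactly the paper's argument, so nothing to add there. For $\Gamma_{i,j}$ your instinct to do a direct computation is sound (the paper itself disposes of this case with the one-line claim $d_\Box(\Gamma_{i,j}(U,\mathcal A),\Gamma_{i,j}(V,\mathcal B))\le d_\Box(U,V)$, which is actually false when the partitions differ — e.g.\ $U=V\equiv 1$, $p_{1,1}=0$, $\mathcal A$ putting all mass in part $1$ and $\mathcal B$ none — so a computation like yours is genuinely needed). But your bookkeeping does not close. The partition-change piece alone costs up to $\lambda^{\oplus 2}(S\triangle S')$, which can already be as large as $2\lambda(\{\sigma\ne\sigma'\})=\sum_\ell\lambda(A_\ell\triangle A_\ell')$, i.e.\ it exhausts the entire budget by itself; your add-and-subtract in the graphon-change piece then incurs a further $\lambda^{\oplus 2}((A_a\times A_b)\triangle(A_a'\times A_b'))$ for each of the two ordered pairs, up to another $2(\lambda(A_i\triangle A_i')+\lambda(A_j\triangle A_j'))$. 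Bounded separately and summed, these error terms can reach roughly three times $\sum_\ell\lambda(A_\ell\triangle A_\ell')$ (take $\mathcal A=([0,\frac12],(\frac12,1])$, $\mathcal A'=([0,1],\emptyset)$: budget $1$, your error bounds total $\frac12+2$). So as written you only get some constant-Lipschitz bound, not $1$-Lipschitz.

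The fix is to avoid the add-and-subtract entirely: the two corrections you introduce actually cancel, and the clean way to see this is to group the decomposition as
\begin{equation*}
V-V'=\bigl(\I 1_{S}U-\I 1_{S'}U'\bigr)-p_{i,j}\bigl(\I 1_{S}-\I 1_{S'}\bigr).
\end{equation*}
Since $\I 1_S U=\I 1_{A_i\times A_j}U+\I 1_{A_j\times A_i}U$ for $i\ne j$ (one term if $i=j$) and likewise for $S'$, the integral of the first bracket over $C\times D$ is bounded in absolute value by the corresponding two summands of the supremum part of $d_\Box^{(k)}$ — no correction term appears. The second bracket contributes at most $\lambda^{\oplus 2}(S\triangle S')$, and your observation that any point of $S\triangle S'$ has a coordinate where the part-assignments $\sigma,\sigma'$ disagree gives $\lambda^{\oplus 2}(S\triangle S')\le 2\lambda(\{\sigma\ne\sigma'\})=\sum_\ell\lambda(A_\ell\triangle A_\ell')$. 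Summing the two contributions and taking the supremum over $C,D$ yields the $1$-Lipschitz bound.
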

\begin{proof}
	First, consider $\Gamma:\Wk\to \mathcal{W}$. Take arbitrary $(U,\mathcal{A}),(V,\mathcal{B})\in \Wk$.
	Let $\C A=(A_1,\dots,A_k)$ and $\mathcal{B}=(B_1,\dots,B_k)$. Clearly, the pairwise products $A_i\times A_j$ (resp.\ $B_i\times B_j$) for $i,j\in [k]$ partition $[0,1]^2$. Thus
	we have
	\begin{eqnarray}		
			d_\Box(\Gamma(U,\mathcal{A}),\Gamma(V,\mathcal{B})) &= & d_\Box(U,V) \
			= \ \sup_{C,D\subseteq [0,1]} \left |\int_{C\times D} (U-V) \dd\lambda^{\oplus 2}\right |\nonumber \\
			&\le & \sup_{C,D\subseteq [0,1]}\sum_{i,j\in [k]}\left|\int_{C\times D} (U\,{\I 1}_{A_i\times A_j}-V\,\I 1_{B_i\times B_j}) \dd\lambda^{\oplus 2}\right| \nonumber\\
			&\le &  d_{\Box}^{(k)}((U,\mathcal{A}),(V,\mathcal{B})).\label{eq:Ga}
			\end{eqnarray}
	Thus the function $\Gamma$ is indeed $1$-Lipschitz.

	The claim about $\Gamma_{i,j}$ follows by observing that
	$$d_\Box(\Gamma_{i,j}(U,\mathcal{A}),\Gamma_{i,j}(V,\mathcal{B}))\le d_\Box(\Gamma(U,\mathcal{A}),\Gamma(V,\mathcal{B}))$$
	for every $(U,\mathcal{A}),(V,\mathcal{B})\in\Wk$.\end{proof}

\begin{lemma}\label{lm:Cont}
	Let $F$ be $\Gamma$ or $\Gamma_{i,j}$ for some $i,j\in [k]$.
	Then $F$  gives rise to a well-defined function $\tWk\to\tW$ which, moreover, is $1$-Lipschitz
	as a function from $(\tWk,\delta_\Box^{(k)})$ to~$(\tW,\delta_\Box)$.
\end{lemma}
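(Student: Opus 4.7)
The plan is to reduce everything to Lemma~\ref{lm:Lipschitz} (which gives the $1$-Lipschitz property at the level of the unquotiented pseudo-metrics) together with the observation that both $\Gamma$ and $\Gamma_{i,j}$ commute with pull-backs along measure-preserving maps. Once these two ingredients are in place, the fact that $F$ descends to the quotient and is $1$-Lipschitz there will follow from an elementary manipulation of the infimum definitions of $\delta_\Box$ and $\delta_\Box^{(k)}$, so the well-definedness (Lipschitz constant~$0$ on each equivalence class) is just the special case of the Lipschitz estimate applied when $\delta_\Box^{(k)}=0$.

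First I would verify the commutation: given any measure-preserving $\phi\colon [0,1]\to[0,1]$ and $(U,\C A)\in\Wk$, we have $\Gamma((U,\C A)^\phi)=U^\phi=\Gamma(U,\C A)^\phi$, which is immediate from the definitions. For $\Gamma_{i,j}$ one uses that for $\C A=(A_1,\dots,A_k)$ and $\C A^\phi=(\phi^{-1}(A_1),\dots,\phi^{-1}(A_k))$, a point $(x,y)\in [0,1]^2$ lies in $(\phi^{-1}(A_i)\times\phi^{-1}(A_j))\cup(\phi^{-1}(A_j)\times\phi^{-1}(A_i))$ if and only if $(\phi(x),\phi(y))$ lies in $(A_i\times A_j)\cup(A_j\times A_i)$; comparing the two-case definition of $\Gamma_{i,j}$ pointwise then gives $\Gamma_{i,j}((U,\C A)^\phi)=\Gamma_{i,j}(U,\C A)^\phi$. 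So in either case $F((U,\C A)^\phi)=F(U,\C A)^\phi$.

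Second I would combine this with Lemma~\ref{lm:Lipschitz}. Fix $(U,\C A),(V,\C B)\in\Wk$ and any measure-preserving maps $\phi,\psi\colon [0,1]\to[0,1]$. By the commutation step and Lemma~\ref{lm:Lipschitz},
\[
d_\Box\bigl(F(U,\C A)^\phi,\,F(V,\C B)^\psi\bigr)
=d_\Box\bigl(F((U,\C A)^\phi),\,F((V,\C B)^\psi)\bigr)
\le d_\Box^{(k)}\bigl((U,\C A)^\phi,(V,\C B)^\psi\bigr).
\]
The left-hand side dominates $\delta_\Box(F(U,\C A),F(V,\C B))$ by~\eqref{eq:CutDistance}, so taking the infimum over $\phi,\psi$ on the right and invoking~\eqref{eq:DeltaK} yields
\[
\delta_\Box\bigl(F(U,\C A),F(V,\C B)\bigr)\le \delta_\Box^{(k)}\bigl((U,\C A),(V,\C B)\bigr).
\]

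Finally, applying this inequality when $\delta_\Box^{(k)}((U,\C A),(V,\C B))=0$ shows that $F(U,\C A)$ and $F(V,\C B)$ are weakly isomorphic, so $F$ descends to a well-defined function $\tWk\to\tW$; the same inequality then states that the descended map is $1$-Lipschitz. There is no real obstacle here: the only mild technical point is checking the commutation for $\Gamma_{i,j}$, but this is just tracking a case distinction through the definition of a pull-back.
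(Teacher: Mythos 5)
Your proposal is correct and follows essentially the same route as the paper's proof: both reduce to Lemma~\ref{lm:Lipschitz} via the commutation $F((U,\C A)^\phi)=F(U,\C A)^\phi$ and then pass to the infimum over measure-preserving maps (the paper does this with an $\e$-optimal choice of $\phi,\psi$, which is equivalent). Your explicit verification of the commutation for $\Gamma_{i,j}$ is a point the paper leaves implicit, but it checks out.
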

\begin{proof} 
	
	Take any $(U,\C A),(V,\C B)\in\Wk$. Let $\e>0$ be arbitrary. Fix measure-preserving maps $\phi,\psi:[0,1]\to [0,1]$ with $d_\Box^{(k)}((U,\C A)^\psi,(V,\C B)^\phi)<\delta_\Box^{(k)}((U,\C A),(V,\C B))+\e$. By Lemma~\ref{lm:Lipschitz}, 
we have 
\begin{eqnarray*}
	\delta_\Box\left(F(U,\C A),F(V,\C B)\right)&\le& 
	d_\Box\left((F(U,\C A))^\psi,(F(V,\C B))^\phi\right)\ =\
	d_\Box\left(F(U^\psi,\C A^\psi),F(V^\phi,\C B^\phi)\right)\\
	&\le& d_\Box^{(k)}((U^\psi,\C A^\psi),(V^\phi,\C B^\phi))
	%	d_\Box\left(V^\phi,U^\psi\right)
	\ <\ \delta_\Box^{(k)}\left((U,\C A),(V,\C B)\right)+\e.
\end{eqnarray*}
This implies both claims about $F$ as $\e>0$ was arbitrary.
\end{proof}

For $(U,(A_1,\dots,A_k))\in \Wk$, define
\beq{eq:DefIkp}
I^{(k)}_{\V p}(U,(A_1,\dots,A_k)):=\frac12\sum_{i,j\in [k]} \int_{A_i\times A_j} I_{p_{i,j}}(U) \dd\lambda^{\oplus 2}.
\eeq

	In the special case of~\eqref{eq:DefIkp}  when $k=1$ and $p_{1,1}=p$ (and we ignore the second component since ${\bf A}^{(1)}$ consists of just the trivial partition of $[0,1]$ into one part), we get the function
$I_p:\C W\to[0,\infty]$ of Chatterjee and Varadhan defined in~\eqref{eq:IpCV}.

\begin{lemma}\label{lm:lsc}
	The function $I^{(k)}_{\V p}$ gives a well-defined function $\tWk\to [0,+\infty]$ which, moreover, is lower semi-continuous as a function on $(\tWk,\delta^{(k)}_\Box)$.
\end{lemma}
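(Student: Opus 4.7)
The plan is to express $I^{(k)}_{\V p}$ as a finite sum of compositions of the Chatterjee--Varadhan functionals $I_{p_{i,j}}:\tW\to[0,\infty]$ with the ``extension'' maps $\Gamma_{i,j}:\tWk\to\tW$ already shown to be well-defined and $1$-Lipschitz in Lemma~\ref{lm:Cont}. Both well-definedness on $\tWk$ and lower semi-continuity then follow for free from Theorem~\ref{th:CV} applied coordinatewise.

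The key identity is the following. For a $k$-coloured graphon $(U,(A_1,\dots,A_k))$ and any $i,j\in[k]$, the graphon $\Gamma_{i,j}(U,\C A)$ equals $U$ on $(A_i\times A_j)\cup (A_j\times A_i)$ and equals the constant $p_{i,j}$ elsewhere; since $h_{p_{i,j}}(p_{i,j})=0$, this gives
$$
 I_{p_{i,j}}\bigl(\Gamma_{i,j}(U,\C A)\bigr)\;=\;\frac12\int_{(A_i\times A_j)\cup(A_j\times A_i)} h_{p_{i,j}}(U(x,y))\dd\lambda^{\oplus 2}(x,y).
$$
Using the symmetry of $U$ and of $\V p$, summing over unordered pairs $i\le j$ recovers
$$
 \sum_{i\le j} I_{p_{i,j}}\bigl(\Gamma_{i,j}(U,\C A)\bigr)\;=\;\frac12\sum_{i,j\in[k]}\int_{A_i\times A_j} h_{p_{i,j}}(U(x,y))\dd\lambda^{\oplus 2}(x,y)\;=\;I^{(k)}_{\V p}(U,\C A),
$$
where the off-diagonal pairs $\{i,j\}$ each contribute once (matching the double count in the right-hand sum) and the diagonal contributions match via the extra factor $\tfrac12$ in $I_{p_{i,j}}$. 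This is just a bookkeeping check on the integrals; I would present it in one short display.

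Given this identity, well-definedness on $\tWk$ is immediate: by Theorem~\ref{th:CV} each $I_{p_{i,j}}$ descends to a function on $\tW$, and by Lemma~\ref{lm:Cont} each $\Gamma_{i,j}$ descends to a function $\tWk\to\tW$, so every summand $I_{p_{i,j}}\circ\Gamma_{i,j}$ is a well-defined $[0,\infty]$-valued function on $\tWk$. For lower semi-continuity, Theorem~\ref{th:CV} asserts that each $I_{p_{i,j}}$ is lsc on $(\tW,\delta_\Box)$, and Lemma~\ref{lm:Cont} makes each $\Gamma_{i,j}$ continuous $(\tWk,\delta^{(k)}_\Box)\to(\tW,\delta_\Box)$; hence each $I_{p_{i,j}}\circ\Gamma_{i,j}$ is lsc on $\tWk$, and a finite sum of $[0,\infty]$-valued lsc functions is lsc.

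The only real content is verifying the identity above; no compactness, no approximation, and no appeal to Theorem~\ref{th:compact colorod graphon} is needed at this step (that will come later, when passing from $I^{(k)}_{\V p}$ on $\tWk$ to $J_{\V\alpha,\V p}$ on $\tW$ via an infimum over the fibre of $\Gamma$). The sole mild pitfall is keeping the $\tfrac12$ factors and the diagonal terms straight in the bookkeeping identity, but this is routine.
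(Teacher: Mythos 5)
Your proposal is correct and is essentially identical to the paper's own proof: the paper states the same decomposition $I^{(k)}_{\V p}(U,\C A)=\sum_{1\le i\le j\le k} I_{p_{i,j}}(\Gamma_{i,j}(U,\C A))$ and then deduces well-definedness and lower semi-continuity from Theorem~\ref{th:CV} together with the continuity of $\Gamma_{i,j}$ from Lemma~\ref{lm:Cont}, exactly as you do. Your explicit bookkeeping of the diagonal versus off-diagonal terms is a slightly more detailed write-up of the identity the paper asserts in one line, but the argument is the same.
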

\begin{proof}
Note that we can write
	\beq{eq:IkWCV}
	I^{(k)}_{\V p}(U,\mathcal{A})=\sum_{1\le i\le j\le k} I_{p_{i,j}}(\Gamma_{i,j}(U,\mathcal{A})),\quad (U,\C A)\in\Wk,
	\eeq
	because $\Gamma_{i,j}(U,\C A)$ assumes value $p_{i,j}$ outside of $(A_i\times A_j)\cup (A_j\times A_i)$ while $I_p(p)=0$ for any $p\in [0,1]$. Recall that, by Theorem~\ref{th:CV},
	% of Chatterjee and Varadhan, 
	$I_p$ gives a well-defined function $\tW\to[0,\infty]$ for every $p\in [0,1]$. 
	Thus, by Lemma~\ref{lm:Cont}, the right-hand side of~\eqref{eq:IkWCV} does not change if we replace $(U,\C A)$ by any other element of $\Wk$ at $\delta_\Box^{(k)}$-distance 0. We conclude that $I_{\V p}^{(k)}$ gives a well-defined function on~$\tWk$.
	
	Each composition $I_p\circ \Gamma_{i,j}$ 
	%of the continuous function $\Gamma_{i,j}$ and the lsc function $I_p$ 
	is lsc as a function $(\tWk,\delta_{\Box}^{(k)})\to [0,\infty]$	
	because, for every $\rho\in\I R$, the level set $\{I_p\circ \Gamma_{ij}\le \rho\}$ is closed as the pre-image under the continuous function $\Gamma_{i,j}:\tWk\to\tW$ of the closed 
	set~$\{I_p\le \rho\}$.
	%set~$\{\T U\in \tW\mid I_p(\T U)\le a\}$.
	(Recall that the function $I_p:\tW\to [0,\infty]$ is lsc by Theorem~\ref{th:CV}.)
	Thus $I^{(k)}_{\V p}:\tWk\to[0,\infty]$ is lsc by~\eqref{eq:IkWCV}, as a finite sum of lsc functions.
\end{proof}

Now we are ready to show that $J_{\V\alpha,\V p}$ and $R_{\V p}$ are lsc (in particular, proving Theorem~\ref{th:JLSC}). The argument showing the lower semi-continuity of these functions is motivated by the Contraction Principle (see e.g.\ \DZ{Theorem 4.2.1} or \RS{Section~3.1}). 

\begin{corollary}\label{cr:lsc}
	For every symmetric matrix $\V p\in [0,1]^{k\times k}$ and every non-zero real vector $\V\alpha\in [0,\infty)^k$, the functions $J_{\V\alpha,\V p}$ and $R_{\V p}$ are lower semi-continuous on $(\tW,\delta_\Box)$.	
\end{corollary}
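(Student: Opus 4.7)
The plan is to identify $J_{\V\alpha,\V p}$ and $R_{\V p}$ as infima of the lsc function $I^{(k)}_{\V p}$ over suitable closed subsets of the compact space $\tWk$, and then deduce lower semi-continuity via a standard compactness argument.

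First, I would establish the identities
\begin{align*}
J_{\V\alpha,\V p}(\T U)&=\inf\left\{I^{(k)}_{\V p}(V,\C A)\mid (V,\C A)\in\Wk,\ V\in\T U,\ \C A\in\Aalpha\right\},\\
R_{\V p}(\T U)&=\inf\left\{I^{(k)}_{\V p}(V,\C A)\mid (V,\C A)\in\Wk,\ V\in\T U\right\}.
\end{align*}
The first identity amounts to showing that pinning $\C A$ to the canonical interval partition $(\cI{\V\alpha}{1},\dots,\cI{\V\alpha}{k})$ does not alter the infimum when $V$ is allowed to range over $\T U$. Given any $(V,\C A)\in\Wk$ with $\C A\in\Aalpha$, Theorem~\ref{th:ISMS} yields a measure-preserving bijection $\phi\colon[0,1]\to[0,1]$ that sends $\cI{\V\alpha}{i}$ onto $A_i$ for every $i\in[k]$; then $(V^\phi,\C A^\phi)$ has the canonical partition, $V^\phi\in\T U$ since $\phi$ is measure-preserving, and the integrals defining $I^{(k)}_{\V p}$ are invariant under this change of variable. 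The second identity then follows because letting $\V\alpha$ range over the simplex is equivalent to letting $\C A$ vary freely in $\Ak$.

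Next, for the lsc of $J_{\V\alpha,\V p}$, suppose $\T{U_n}\to\T U$ in $(\tW,\delta_\Box)$; passing to a subsequence, assume $J_{\V\alpha,\V p}(\T{U_n})\to L\in[0,\infty]$. For each $n$, pick $(V_n,\C A_n)\in\Wk$ with $V_n\in\T{U_n}$, $\C A_n\in\Aalpha$ and $I^{(k)}_{\V p}(V_n,\C A_n)\le J_{\V\alpha,\V p}(\T{U_n})+1/n$. By Theorem~\ref{th:compact colorod graphon}, the space $(\tWk,\delta^{(k)}_\Box)$ is compact, so after a further subsequence $[(V_n,\C A_n)]$ converges in $\tWk$ to some $[(V^*,\C A^*)]$. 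The $1$-Lipschitz map $\Gamma\colon\tWk\to\tW$ from Lemma~\ref{lm:Cont} gives $\T{V^*}=\T U$. The constraint $\C A^*\in\Aalpha$ persists in the limit because $d^{(k)}_\Box$ contains the term $\sum_{i\in[k]}\lambda(A_i\bigtriangleup B_i)$, so the vector of part measures is continuous on $(\Wk,d^{(k)}_\Box)$ and is moreover invariant under the $\delta^{(k)}_\Box$-quotient (measure-preserving pull-backs preserve part measures). Lemma~\ref{lm:lsc} then yields
$$
J_{\V\alpha,\V p}(\T U)\le I^{(k)}_{\V p}(V^*,\C A^*)\le \liminf_{n\to\infty} I^{(k)}_{\V p}(V_n,\C A_n)\le L.
$$
The argument for $R_{\V p}$ is identical except that the constraint $\C A_n\in\Aalpha$ is dropped, and correspondingly one need not worry about what part measures $\C A^*$ inherits.

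The main obstacle I anticipate is the verification that the subset of $\tWk$ corresponding to $\V\alpha$-partitions (for fixed $\V\alpha$) is closed under $\delta^{(k)}_\Box$-convergence; the term $\sum_i\lambda(A_i\bigtriangleup B_i)$ in $d^{(k)}_\Box$ makes this intuitive on $\Wk$, but one must check that the vector of part measures is well-defined on the quotient $\tWk$, which reduces to the elementary observation that measure-preserving pull-backs preserve the multiset of part measures. Everything else in the argument is a routine application of the compactness theorem, the continuity of $\Gamma$, and the lower semi-continuity of~$I^{(k)}_{\V p}$ established earlier.
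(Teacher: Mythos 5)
Your proposal is correct and follows essentially the same route as the paper: both rest on the identity expressing $J_{\V\alpha,\V p}(\T U)$ (resp.\ $R_{\V p}(\T U)$) as $\inf I^{(k)}_{\V p}$ over $\Gamma^{-1}(\T U)\cap\tWalpha$ (resp.\ $\Gamma^{-1}(\T U)$), together with the compactness of $\tWk$, the continuity of $\Gamma$, the closedness of $\tWalpha$ via the part-measure term in $d^{(k)}_\Box$, and the lower semi-continuity of $I^{(k)}_{\V p}$. The only difference is cosmetic: the paper verifies that each level set $\{J_{\V\alpha,\V p}\le \rho\}$ is the $\Gamma$-image of a compact set and hence closed, whereas you use the equivalent sequential formulation of lower semi-continuity with near-minimizers.
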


\begin{proof}
	Note that $J_{\V\alpha,\V p}(\T U)$ for any $U\in\C W$ is equal to the infimum of $I_{\V p}^{(k)}(V,\C A)$ over all $(V,\C A)\in \Walpha$ such that $\Gamma(V,\C A)=V$ belongs to~$\T U$. Indeed, for any partition $\C A\in \Aalpha$ of $[0,1]$ one can find by 
	Theorem~\ref{th:ISMS}
	%the Isomorphism Theorem for Measure Spaces (Theorem~\ref{th:ISMS}) 
	a measure-preserving Borel bijection	$\phi$ of $[0,1]$ such that $\C A^\phi$ is equal a.e.\ to $(\cI{\V\alpha}{1},\dots,\cI{\V\alpha}{k})$.% if we ignore sets of measure 0 in both partitions. 

	In the rest of the proof, let us view $\Gamma$ and $I^{(k)}_{\V p}$ as functions on $\tWk$ (by Lemmas~\ref{lm:Cont} and~\ref{lm:lsc}). Thus we have
	\begin{equation}\label{eq:Jap2}
	J_{\V\alpha,\V p}(\T U)=\inf_{\Gamma^{-1}(\T U)\cap\tWalpha}\, I_{\V p}^{(k)},
\quad \mbox{for each $ U\in\C W$},
	\end{equation}
 where $\tWalpha\subseteq \tWk$ denotes the set of all $\delta_\Box^{(k)}$-equivalence classes that intersect $\Walpha$ (equivalently, lie entirely inside $\Walpha$).
 
Take any graphon $U\in\C W$. The pre-image $\Gamma^{-1}(\T U)$ is a closed subset of $\tWk$ by the continuity of $\Gamma$ (Lemma~\ref{lm:Cont}). Also, $\tWalpha$
	is a closed subset of $\tWk$: if 
	$(V,(B_1,\dots,B_k))\in \Wk$ is not in $\Walpha$, 
	%the $\delta_\Box^{(k)}$-equivalence class of $(V,(B_1,\dots,B_k))\in \Wk$ is not in $\tWalpha$, 
	then the $\delta_{\Box}^{(k)}$-ball of radius e.g.\ 
	$$
	 \frac12\,\left\|\,(\lambda(B_i))_{i\in [k]} - \frac1{\|\V\alpha\|_1} \V\alpha\,\right\|_1>0
	 $$
	  around it is disjoint from~$\Walpha$. Recall that the space $\tWk$ is compact by Theorem~\ref{th:compact colorod graphon}. Thus the infimum in~\eqref{eq:Jap2} is taken over a (non-empty) compact set.  As any lsc function attains its infimum on any non-empty compact set and
	$I^{(k)}_{\V p}:\tWk\to [0,+\infty]$ is lsc by Lemma~\ref{lm:lsc}, 
	%for each $U\in \C W$ 
	there is $(V,\C A)\in \Walpha$ such that $V\in\T U$ and $I_{\V p}^{(k)}(\Tk{V,\C A})=J_{\V\alpha,\V p}(\T U)$,
	where $\Tk{V,\C A}$ denotes the $\delta_\Box^{(k)}$-equivalence class of $(V,\C A)$.
	
	Thus for any $\rho\in\I R$ the level set 
	$\{J_{\V\alpha,\V p}\le \rho\}$ 
	is equal to the image of $\{I_{\V p}^{(k)}\le \rho\}\cap\tWalpha$ under~$\Gamma$. Since the function $I_{\V p}^{(k)}$ is lsc by Lemma~\ref{lm:lsc}, the level set $\{I_{\V p}^{(k)}\le \rho\}$ is a closed and thus compact subset of $\tWk$. Thus the set $\{I_{\V p}^{(k)}\le \rho\}\cap\tWalpha$ is compact. Its image $\{J_{\V\alpha,\V p}\le \rho\}$  under the continuous map $\Gamma:\tWk\to \tW$ is compact and thus closed. Since $\rho\in\I R$ was arbitrary, the function $J_{\V\alpha,\V p}:\tW\to [0,\infty]$ is lsc.

	Since $R_{\V p}(\T U)$ is equal to the infimum of $I_{\V p}^{(k)}$ over $\Gamma^{-1}(\T U)$, the same argument (except we do not need to intersect $\Gamma^{-1}(\T U)$ with $\tWalpha$ anywhere) also works for $R_{\V p}$. 
\end{proof}

\section{Proof of Theorem~\ref{th:GenLDP}}\label{GenLDP}

First we prove two auxiliary lemmas. The first one states, informally speaking, that the measure $\tP_{\V a,\V p}$ is ``uniformly continuous'' in~$\V a$.

\begin{lemma}\label{lm:DifferentRatios}
	For every symmetric	matrix $\V p\in [0,1]^{k\times k}$, real $\e\in (0,1)$ and non-zero integer vectors $\V a,\V b\in\I N^k\NoZero$, if 
	$$\|\V b-\V a\|_1\le \e\, \min\left\{\,\|\V a\|_1,\|\V b\|_1\,\right\}$$ 
	then there is a (discrete) measure $\T{\I C}$ on $\tW\times \tW$ which gives a coupling between $\tP_{\V a,\V p}$ and $\tP_{\V b,\V p}$ such that for every $(\T U,\T V)$ in the support of $\T{\I C}$ we have $\delta_\Box(U,V)\le 4\e/(1-\e)$.
	\end{lemma}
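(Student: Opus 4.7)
The plan is to couple $\tP_{\V a,\V p}$ and $\tP_{\V b,\V p}$ via a common ``larger'' random graph. Set $\V c:=(\max(a_i,b_i))_{i\in[k]}$ and write $n_a:=\|\V a\|_1$, $n_b:=\|\V b\|_1$, $n_c:=\|\V c\|_1$. Generate a single $G\sim \I G(\V c,\V p)$ on $[n_c]$ and, for each $i\in[k]$, let $G_a$ (respectively $G_b$) be the subgraph of $G$ induced on the first $a_i$ (respectively $b_i$) vertices of the $i$-th block of $G$. Because the edges of $\I G(\V c,\V p)$ within and across blocks are independent with probabilities specified by the same matrix $\V p$, we have $G_a \sim \I G(\V a,\V p)$ and $G_b\sim \I G(\V b,\V p)$; define the coupling $\T{\I C}$ as the joint distribution of $(\tf{G_a},\tf{G_b})$ on $\tW\times\tW$.

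It remains to prove that $\delta_\Box(\f{G_a},\f{G_b})\le 4\e/(1-\e)$ for every realisation of $G$, which I would do by bounding both $\delta_\Box(\f{G_a},\f{G})$ and $\delta_\Box(\f{G_b},\f{G})$ by $2\e$ and applying the triangle inequality. To bound $\delta_\Box(\f{G_a},\f{G})$, let $\sigma:[n_a]\to[n_c]$ be the strictly increasing injection whose image is the set of labels (as vertices of $G$) of the vertices of $G_a$. Define $\phi:[0,1]\to[0,1]$ to be the piecewise-affine map that sends the interval $[(\ell-1)/n_a,\ell/n_a]$ affinely onto $[(\sigma(\ell)-1)/n_c,\sigma(\ell)/n_c]$ for each $\ell\in[n_a]$ (values on the measure-zero boundary set are immaterial). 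A direct check on product intervals gives $\f{G_a}=(\f{G})^\phi$, since $G_a$ is the induced subgraph of $G$ on $\sigma([n_a])$.

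The push-forward $\phi_*\lambda$ is absolutely continuous with respect to $\lambda$, and its Radon--Nikodym derivative $D:=\dd(\phi_*\lambda)/\dd\lambda$ equals $n_c/n_a$ on $\bigcup_\ell[(\sigma(\ell)-1)/n_c,\sigma(\ell)/n_c]$ and $0$ elsewhere. From $n_c-n_a=\sum_{i\in[k]}\max(0,b_i-a_i)\le \|\V b-\V a\|_1\le \e n_a$ we get $D\le 1+\e$ almost everywhere, so Lemma~\ref{lm:AlmostMP} yields $\delta_\Box(\f{G},\f{G_a})\le 2\e$. The identical argument with $\V b$ in place of $\V a$ gives $\delta_\Box(\f{G},\f{G_b})\le 2\e$, and so $\delta_\Box(\f{G_a},\f{G_b})\le 4\e\le 4\e/(1-\e)$ by the triangle inequality, as desired. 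The main point to get right is the identification of the common coupling model (the componentwise-max vector $\V c$) and the verification that the stretching map $\phi$ satisfies $\f{G_a}=(\f{G})^\phi$ with the appropriate Radon--Nikodym derivative; once that is in place, Lemma~\ref{lm:AlmostMP} does all the remaining work, so no step is genuinely difficult.
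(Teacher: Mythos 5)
Your proof is correct and follows essentially the same strategy as the paper: couple the two block models through a common graph that shares all coin flips on the overlapping blocks, then bound each marginal's cut-distance to that common graph by a vertex-deletion/stretching argument via Lemma~\ref{lm:AlmostMP}. The only (immaterial) difference is that you pass through the common \emph{supergraph} $\I G(\V c,\V p)$ with $\V c$ the componentwise maximum and apply Lemma~\ref{lm:AlmostMP} directly to a piecewise-affine map, whereas the paper passes through the common induced subgraph of sizes $\min(a_i,b_i)$ and invokes Lemma~\ref{lm:Delete01}, which even gives you the marginally better constant $4\e$ in place of $4\e/(1-\e)$.
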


\begin{proof}
	Let $m:=\|\V a\|_1$ and $n:=\|\V b\|_1$. 
%Assume without loss of generality that $m\le n$.
		Let $[m]=A_1\cup\dots\cup A_k$ (resp.\ $[n]=B_1\cup\dots\cup B_k$) be the partition into consecutive intervals with $a_1,\dots,a_k$ (resp.\ $b_1,\dots,b_k$) elements. For each $i\in [k]$ fix some subsets $A_i'\subseteq A_i$ and $B_i'\subseteq B_i$ of size $\min(a_i,b_i)$. Define $A':=\cup_{i=1}^k A_i'$ and
	$B':=\cup_{i=1}^k B_i'$. Fix any bijection $h:A'\to B'$ that sends each $A_i'$ to $B_i'$. We have
	\beq{eq:m-A'}
	\left|\,[m]\setminus A'\,\right|\le\sum_{i=1}^k \max(0,a_i-b_i)\le \|\V a-\V b\|_1\le \e m
	\eeq
	and similarly $\left|\,[n]\setminus B'\,\right|\le \e n$. 
	
We can couple random graphs $G\sim \I G(\V a,\V p)$ and $H\sim\I G(\V b,\V p)$ so that every pair $\{x,y\}$ in $A'$ is an edge in $G$ if and only if $\{h(x),h(y)\}$ is an edge in~$H$.
This is possible because, with $i,j\in [k]$ satisfying $(x,y)\in A_i'\times A_j'$, the probability of $\{x,y\}\in E(G)$ is $p_{i,j}$, the same as the probability with which $h(x)\in B_i$ and $h(y)\in B_j$ are made adjacent in~$H$ (so we can just use the same coin flip for both pairs). By making all edge choices to be mutually independent otherwise, we get a probability measure $\I C$ on pairs of graphs which is a coupling between $\I G(\V a,\V p)$ and $\I G(\V b,\V p)$.
 The corresponding measure $\T{\I C}$ on $\tW\times \tW$ gives  a coupling between $\tP_{\V a,\V p}$ and $\tP_{\V b,\V p}$. 

Let us show that this coupling $\I C$ satisfies the distance requirement. Take any pair of graphs $(G,H)$ in the support of~$\I C$. Let $G':=G[A']$ be obtained from $G$ by removing all vertices in $[m]\setminus A'$. We remove at most $\e$ fraction of vertices by~\eqref{eq:m-A'}. By relabelling the vertices of~$G$, we can assume that all removed vertices come at the very end. Then the union of the intervals in the graphon $\f{G}$ corresponding to the vertices of $G'$ is an initial segment of $[0,1]$ of length $s\ge 1-\e$ and the graphon of $G'$ is the pull-back of $\f{G}$ under the map $x\mapsto sx$.
By Lemma~\ref{lm:Delete01}, we have $\delta_\Box(\f{G},\f{G'})\le 2(\frac{1}{1-\e}-1)= \frac{2\e}{1-\e}$. By symmetry, the same estimate applies to the pair $(H,H')$, where $H'$ is obtained from $H$ by deleting all vertices from~$[n]\setminus B'$. By the definition of our coupling, the function $h:V(G')\to V(H')$ is (deterministically) an isomorphism between $G'$ and~$H'$. Thus the graphons of $G'$ and $H'$ are weakly isomorphic. 
This gives by the Triangle Inequality the required upper bound on the cut-distance between $\tf{G}$ and $\tf{H}$. 
\end{proof}

The function $J_{\V\alpha,\V p}(\T V)$ is not in general continuous in $\V\alpha$ even when $\V p$ and $\T V$ are fixed. (For example, if $k=2$, $V=1-\f{K_2}$ is the limit of $K_n\sqcup K_n$, the disjoint union of two cliques of order $n$ each, and $\V p$ is the identity $2\times 2$ matrix then $J_{\V\alpha,\V p}(\T V)$ is $0$ for $\V\alpha=(\frac12,\frac12)$ and $\infty$ otherwise.) However, the following version of ``uniform semi-continuity'' in $\V\alpha$ will suffice for our purposes.

\begin{lemma}\label{lm:ContOfJ}
	Fix any symmetric $\V p\in [0,1]^{k\times k}$.
	Then for every $\eta>0$ there is $\e=\e(\eta,\V p)>0$ such that if  vectors ${\V\gamma},\V\kappa\in [0,\infty)^k$ with $\|\V\gamma\|_1=\|\V\kappa\|_1=1$ satisfy
	\begin{equation}\label{eq:cond}
		\kappa_i\le (1+\e)\gamma_i,\quad\mbox{for every $i\in [k]$},
		%\mbox{ and if $\kappa_i=0$ then $\gamma_i=0$,}		
		\end{equation}
	 then
	for every $U\in {\mathcal{W}}$ there is ${V}\in {\C W}$ with $\delta_\Box(U,{V})\le \eta$ and
	$$J_{\V\kappa,\V p}(\T{V})\le J_{{\V\gamma},\V p}(\T U) +\eta.$$
\end{lemma}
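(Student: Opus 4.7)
The plan is to realize the infimum defining $J_{\V\gamma,\V p}(\T U)$ by an explicit $k$-coloured graphon, then apply a piecewise-affine pull-back to rescale the colour classes from masses $\V\gamma$ to $\V\kappa$, controlling both the cut-distance and the rate function.

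First, if $J_{\V\gamma,\V p}(\T U)=\infty$ the conclusion is trivial with $V:=U$. Otherwise, the argument used in the proof of Corollary~\ref{cr:lsc} (combining the compactness of $\tWk$ from Theorem~\ref{th:compact colorod graphon} with the lower semi-continuity of $I^{(k)}_{\V p}$ from Lemma~\ref{lm:lsc}) yields a $k$-coloured graphon $(U',\C A)$ with $\C A=(A_1,\dots,A_k)$, $U'\in\T U$, $\lambda(A_i)=\gamma_i$ for every $i\in [k]$, and $I^{(k)}_{\V p}(U',\C A)=J_{\V\gamma,\V p}(\T U)$. By Theorem~\ref{th:ISMS}, after composing $(U',\C A)$ with a measure-preserving Borel bijection of $[0,1]$ if necessary, I may further assume $A_i=\cI{\V\gamma}{i}$ for each $i$.

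Define $\phi:[0,1]\to [0,1]$ to be the piecewise-affine map which, for every $i$ with $\gamma_i>0$, sends $\cI{\V\kappa}{i}$ bijectively and affinely onto $\cI{\V\gamma}{i}$; the hypothesis $\kappa_i\le(1+\e)\gamma_i$ forces $\kappa_i=0$ whenever $\gamma_i=0$, so degenerate parts cause no trouble. The push-forward $\phi_*\lambda$ is absolutely continuous with respect to $\lambda$ with Radon--Nikodym derivative $\kappa_i/\gamma_i\le 1+\e$ on each $\cI{\V\gamma}{i}$, so Lemma~\ref{lm:AlmostMP} implies that $V:=(U')^\phi$ is a graphon satisfying $\delta_\Box(U,V)\le 2\e$. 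The pair $(V,\C B)$ with $\C B:=(\cI{\V\kappa}{1},\dots,\cI{\V\kappa}{k})$ is then a $k$-coloured graphon, and a change of variables on each block (using that $\phi$ has constant Jacobian $\gamma_i/\kappa_i$ on $\cI{\V\kappa}{i}$) gives
$$\int_{\cI{\V\kappa}{i}\times\cI{\V\kappa}{j}} h_{p_{i,j}}(V)\dd\lambda^{\oplus 2}=\frac{\kappa_i\kappa_j}{\gamma_i\gamma_j}\int_{\cI{\V\gamma}{i}\times\cI{\V\gamma}{j}} h_{p_{i,j}}(U')\dd\lambda^{\oplus 2}$$
whenever $\gamma_i,\gamma_j>0$ (both sides vanishing otherwise). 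Summing over $i,j$ and using $\kappa_i/\gamma_i\le 1+\e$ yields the multiplicative estimate $J_{\V\kappa,\V p}(\T V)\le I^{(k)}_{\V p}(V,\C B)\le (1+\e)^2\, J_{\V\gamma,\V p}(\T U)$.

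The main obstacle is converting this multiplicative bound into the required additive bound $J_{\V\kappa,\V p}(\T V)\le J_{\V\gamma,\V p}(\T U)+\eta$, since $\e$ must depend only on $\eta$ and $\V p$ (and not on $U$). The resolution is the observation that whenever $J_{\V\gamma,\V p}(\T U)<\infty$, it is bounded by some constant $M(\V p)$ depending only on $\V p$: indeed, any entry $p_{i,j}\in\{0,1\}$ forces the corresponding integral over $A_i\times A_j$ to vanish (else it would be $+\infty$), while for $p_{i,j}\in(0,1)$ the entropy $h_{p_{i,j}}$ is continuous, hence bounded, on $[0,1]$. Choosing $\e=\e(\eta,\V p)$ small enough that both $2\e\le\eta$ and $((1+\e)^2-1)M(\V p)\le\eta$ hold completes the proof.
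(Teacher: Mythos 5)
Your proposal is correct and follows essentially the same route as the paper's proof: the same piecewise-affine pull-back $\phi$ rescaling the interval partition from $\V\gamma$ to $\V\kappa$, the same appeal to Lemma~\ref{lm:AlmostMP} for both measurability and the bound $\delta_\Box(U,V)\le 2\e$, and the same observation that finiteness of $J_{\V\gamma,\V p}(\T U)$ forces a bound $M(\V p)$ depending only on $\V p$, which converts the multiplicative factor $(1+\e)^2$ into an additive $\eta$. The only (immaterial) difference is that you realize the infimum exactly via the compactness argument of Corollary~\ref{cr:lsc}, whereas the paper simply takes a representative attaining it within $\eta/2$.
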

\begin{proof}
%	Since $J_{\V\alpha,\V p}$ is unaffected when we remove zero entries from $\V\alpha$ (and update $k$ and the matrix $\V p$ accordingly), we can assume by~\eqref{eq:cond} that $\V\kappa,\V\gamma\in (0,\infty)^k$.
	For every fixed $p\in (0,1)$, the relative entropy function $h_p:[0,1]\to [0,\infty]$ is bounded (in fact, by  $\max\{h_p(0),h_p(1)\}<\infty$). Thus we can find a constant $C$ that depends on $\V p$ only such that for every $x\in [0,1]$ and $i,j\in [k]$, $h_{p_{i,j}}(x)$ is either $\infty$ or at most~$C$.

Let us show that any positive $\e\le \min\{\,\frac{\eta}{3(2C+\eta)},\,\frac{\eta}2\,\}$ works. Let ${\V\gamma}$, $\V\kappa$, and $U$ be as in the lemma. 	Assume that $J_{{\V\gamma},\V p}(\T U)<\infty$ for otherwise  we can trivially take ${V}:=U$. By the choice of $C$, we have that $J_{{\V\gamma},\V p}(\T U)\le C$. 

By replacing $U\in\C W$ with a weakly isomorphic graphon, assume that, for the partition $(\cI{{\V\gamma}}{i})_{i\in [k]}$ of $[0,1]$ into intervals, 
we have
	$$\frac{1}{2}\sum_{i,j\in [k]} \int_{\cI{{\V\gamma}}{i}\times \cI{{\V\gamma}}{j}} I_{p_{i,j}}(U) \dd\lambda^{\oplus 2}<J_{{\V\gamma},\V p}(\T U)+\frac{\eta}{2}.$$
	
Let $\phi:[0,1]\to [0,1]$ be the a.e.\ defined function  which  on each interval $\cI{{\V\kappa}}{i}$ of positive length is the increasing linear function that bijectively maps this interval onto $\cI{{\V\gamma}}{i}$. 
The 
%(unique up to a $\lambda$-null set) 
Radon-Nikodym derivative $D:=\frac{\dd \mu}{\dd\lambda}$ of the push-forward $\mu:=\phi_*\lambda$ of the Lebesgue measure $\lambda$ along $\phi$ assumes value
$\kappa_i/\gamma_i$ a.e.\ on~$\cI{{\V\gamma}}{i}$. (The union of $\cI{\V\gamma}{i}$ with $\gamma_i=0$ 
is a countable and thus null set,
%has obviously Lebesgue measure 0 
and can be ignored from the point of view of~$D$.)
	Let $V:=U^\phi$. It is a graphon by the first part of Lemma~\ref{lm:AlmostMP}. 
We have by the definition of $J_{\V\kappa,\V p}$ that
	\begin{equation*}
			J_{\V\kappa,\V p}(\T{V})\le  \ \frac{1}{2}\sum_{i,j\in [k]} \int_{\cI{{\V\kappa}}{i}\times \cI{{\V\kappa}}{j}} I_{p_{i,j}}(U^\phi) \ \dd \lambda^{\oplus 2} 
			=  \ \frac{1}{2}\sum_{i,j\in [k]} \frac{\kappa_i\kappa_j}{\gamma_i\gamma_j}\int_{\cI{{\V\gamma}}{i}\times \cI{{\V\gamma}}{j}} I_{p_{i,j}}(U) \dd\lambda^{\oplus 2}.
	\end{equation*}
 By $\frac{\kappa_i\kappa_j}{\gamma_i\gamma_j}\le (1+\e)^2\le 1+3\e$, this  is at most
 $$
 (1+3\e)\,\frac1{2}\sum_{i,j\in [k]} \int_{\cI{{\V\gamma}}{i}\times \cI{{\V\gamma}}{j}} I_{p_{i,j}}(U) \dd\lambda^{\oplus 2} 
\le (1+3\e)  \left(J_{{\V\gamma},\V p}(\T U)+\frac{\eta}2 \right).
% \le  \ J_{{\V\gamma},\V p}(\T U)+ \eta.	
 $$	
 By $J_{{\V\gamma},\V p}(\T U)\le C$ and our choice of $\e$, this in turn is at most
$$
J_{{\V\gamma},\V p}(\T U) + 3\e C+(1+3\e)\frac{\eta}2\le  \ J_{{\V\gamma},\V p}(\T U)+ \eta.
$$

Thus it remains to estimate the cut-distance between $U$ and~${V}=U^\phi$. Since the Radon-Nikodym derivative $D=\frac{\dd \phi_*\lambda}{\dd\lambda}$ satisfies $D(x)\le 1+\e$ for a.e.\ $x\in [0,1]$ by~\eqref{eq:cond}, we have by Lemma~\ref{lm:AlmostMP} that $\delta_\Box(U,{V})\le 2\e\le \eta$. Thus $V$ is as desired.\end{proof}

Now we are ready to prove Theorem~\ref{th:GenLDP}.

\begin{proof}[Proof of Theorem~\ref{th:GenLDP}]
Recall that we have to establish an LDP for the sequence $(\tP_{\V a_n,\V p})_{n\in\I N}$ of measures, where the integer vectors $\V a_n$, after scaling by $1/n$, converge to a non-zero vector $\V\alpha\in[0,\infty)^k\NoZero$. Since the underlying metric space $(\tW,\delta_{\Box})$ is compact and the proposed rate function $J_{\V\alpha,\V p}$ is lsc by Theorem~\ref{th:JLSC}, it suffices to prove the bounds in~\eqref{eq:lower} and~\eqref{eq:upper} of Lemma~\ref{lm:LDP} for any given graphon~$\T U\in \tW$.

Let us show the upper bound first, that is, that
\beq{eq:red9}
 \lim_{\eta\to 0}\limsup_{n\to\infty} \frac{1}{(\|\V a_n\|_1)^2}\log \tP_{\V a_n,\V p}(\Sball(\widetilde U,\eta))\le -J_{\V\alpha,\V p}(\T U).
\eeq

 Take any $\eta>0$. Assume that, for example, $\eta<1/9$. Let $m\in\I N$ be sufficiently large. Define
\beq{eq:b}
 \V b:=(a_{m,1}\I 1_{\alpha_1>0},\ldots,a_{m,k}\I 1_{\alpha_k>0})\in\I Z^k.
 \eeq
 In other words, we let $\V b$ to be $\V a_m$ except we set its $i$-th entry $b_i$ to be~0  for each $i\in [k]$ with $\alpha_i=0$.
 By Theorem~\ref{th:BCGPS} (the LDP by Borgs et al~\cite{BCGPS}) and Theorem~\ref{th:JLSC}, we have that
 \beq{eq:BCGPSApplied1}
 \limsup_{n\to\infty} \frac{1}{(n\,\|\V b\|_1)^2}\log\tP_{n\V b,\V p}(\Sball(\widetilde U,\eta))\le 
 %-\inf\{J_{\V b,\V p}(V)\mid V\in\Sball(\widetilde U,\eta)\}.
 -\inf_{\Sball(\widetilde U,\eta)} J_{\V b,\V p}.
 \eeq

Since $m$ is sufficiently large, we can assume that $\|\frac1n\, \V a_n-\V\alpha\|_1\le \xi\,\|\V\alpha\|_1$ for every $n\ge m$, where e.g.~$\xi:=\eta/40$. 
In particular, we have 
$\|\V a_m-\V b\|_1
\le \|\V a_m-m\V\alpha\|_1
\le \xi m\|\V\alpha\|_1$ from which it follows that 
$$
 \|\V b -m\V\alpha\|_1\le \|\V b -\V a_m\|_1+\| \V a_m - m\V\alpha\|_1\le 2\xi m\,\|\V\alpha\|_1.
$$ 
Let $n$ be sufficiently large (in particular, $n\ge m$) and let $n':= \floor{n/m}$. By above, we have
\hide{\begin{eqnarray}
\|n'\V b-\V a_n\|_1&\le& \textstyle 
 \|\V b\|_1+\|\frac{n}{m}\V b-n\V\alpha\|_1+\|n\V\alpha-\V a_n\|_1\
\le\ \|\V b\|_1+3\xi n\|\V\alpha\|_1\nonumber\\
 &\le &\textstyle
 \|\V b\|_1 + 4\xi \min\{\,\|\frac{n}{m}\V b\|_1,\,\| \V a_n\|_1\,\}\ \le\ \frac{\eta}9\,\min\{\,\|n'\V b\|_1,\,\| \V a_n\|_1\,\}.\label{eq:n'b-an}
\end{eqnarray}
}
\begin{eqnarray}
\|n'\V b-\V a_n\|_1&\le& \textstyle 
 \|\V b\|_1+\|\frac{n}{m}\V b-n\V\alpha\|_1+\|n\V\alpha-\V a_n\|_1\nonumber\\
&\le& \|\V b\|_1+3\xi n\|\V\alpha\|_1\nonumber\\
 &\le &\textstyle
 \|\V b\|_1 + 4\xi \min\{\,\|\frac{n}{m}\V b\|_1,\,\| \V a_n\|_1\,\}\nonumber\\ 
&\le& \textstyle\frac{\eta}9\,\min\{\,\|n'\V b\|_1,\,\| \V a_n\|_1\,\}.\label{eq:n'b-an}
\end{eqnarray}
 Thus, by Lemma~\ref{lm:DifferentRatios}, there is a coupling $\T{\I C}$ between $\tP_{n'\V b,\V p}$ and $\tP_{\V a_n,\V p}$ such that for every $(\widetilde{V},\widetilde{W})$ in the support of $\T{\I C}$ we have $\delta_\Box(V,{W})\le \eta/2$. Thus, if $\widetilde{W}\sim\tP_{\V a_n,\V p}$ lands in $\Sball(\widetilde{ U},\eta/2)$ then necessarily $\widetilde{V}\sim \tP_{n'\V b,\V p}$ lands in $\Sball(\widetilde{U},\eta)$. This gives that
 $$
 \tP_{\V a_n,\V p}(\Sball(\widetilde U,\eta/2))\le \tP_{n'\V b,\V p}(\Sball(\widetilde U,\eta)).
 $$
 Since this is true for every sufficiently large $n$ and it holds by~\eqref{eq:n'b-an} that, for example, 
 $\|\V a_n\|_1/(n'\,\|\V b\|_1) \ge 1-\eta/9\ge \sqrt{1-\eta}$, we have that
 \beq{eq:red1}
 \limsup_{n\to\infty} \frac{1-\eta}{(\|\V a_n\|_1)^2}\log \tP_{\V a_n,\V p}(\Sball(\widetilde U,\eta/2))\le \limsup_{n'\to\infty}  \frac{1}{(n'\,\|\V b\|_1)^2}\log \tP_{n'\V b,\V p}(\Sball(\widetilde U,\eta)).
 \eeq

Let us turn our attention to the right-hand side of~\eqref{eq:BCGPSApplied1}. 
Pick some $\T V\in \Sball(\T U,\eta)$ with $J_{\V b,\V p}(\T V)\le \inf_{\Sball(\T U,\eta)} J_{\V b,\V p}+\eta$. (In fact, by the lower semi-continuity of $J_{\V b,\V p}$ and the compactness of $\Sball(\T U,\eta)$, we could have required
that $J_{\V b,\V p}(\T V)= \inf_{\Sball(\T U,\eta)} J_{\V b,\V p}$.) 
Since $\V a_n/\|\V a_n\|_1$ converges to $\V\alpha/\|\V\alpha\|_1$ as $n\to\infty$ and we chose $m$ to be sufficiently large,
we can assume that $b_i=0$ if and only if $\alpha_i=0$ and that 
Lemma~\ref{lm:ContOfJ} applies for $\V\gamma:=\V b/\|\V b\|_1$ and $\V\kappa:=\V\alpha/\|\V\alpha\|_1$ (and our~$\eta$). 
The lemma gives that 
there is $\T{W}\in\Sball(\T V,\eta)$ such that $J_{\V\alpha,\V p}(\T{W})-\eta\le J_{\V b,\V p}(\T{V})$. Thus we get the following upper bound on the right-hand side of~\eqref{eq:BCGPSApplied1}:
\beq{eq:red2}
-\inf_{\Sball(\T U,\eta)} J_{\V b,\V p}\le -J_{\V b,\V p}(\T V)+\eta\le -J_{\V\alpha,\V p}(\T{W})+2\eta\le -\inf_{\Sball(\T U,2\eta)} J_{\V \alpha,\V p}+2\eta.
\eeq
 By putting~\eqref{eq:BCGPSApplied1}, \eqref{eq:red1} and~\eqref{eq:red2} together we get that, for every $\eta\in (0,1/9)$,
 $$
 \limsup_{n\to\infty} \frac{1-\eta}{(\|\V a_n\|_1)^2}\log \tP_{\V a_n,\V p}(\Sball(\widetilde U,\eta/2))\le
-\inf_{\Sball(\T U,2\eta)} J_{\V \alpha,\V p}+2\eta.
$$ 
 If we take here the limit as $\eta\to 0$ then the infimum in the right-hand side converges to~$J_{\V \alpha,\V p}(\T U)$ by the lower semi-continuity of $J_{\V \alpha,\V p}$ (established in Theorem~\ref{th:JLSC}), giving the claimed upper bound~\eqref{eq:red9}.

Let us turn to the lower bound, i.e.\ we prove~\eqref{eq:lower} for $\T U\in\tW$. As before, take any sufficiently small $\eta>0$, then sufficiently large $m\in\I N$ and define $\V b$ by~\eqref{eq:b}. By Theorems~\ref{th:BCGPS} and~\ref{th:JLSC} applied to the open ball around $\T U$ of radius $2\eta$, we have
 \beq{eq:red4}
\liminf_{n\to\infty} \frac{1}{(n\,\|\V b\|_1)^2}\log\tP_{n\V b,\V p}(\Sball(\widetilde U,2\eta))\ge 
%-\inf\{J_{\V b,\V p}(V)\mid V\in\Sball(\T U,\eta)\}.
-\inf_{\Sball(\T U,\eta)} J_{\V b,\V p}.
 \eeq
 Similarly as for the upper bound, the left-hand side can be upper bounded via Lemma~\ref{lm:ContOfJ} by, for example, 
 \beq{eq:red3}
 \liminf_{n\to\infty} \frac{1+\eta}{(\|\V a_n\|_1)^2}\log \tP_{\V a_n,\V p}(\Sball(\widetilde U,3\eta)). 
 \eeq
 Since $m$ is sufficiently large,
 we can apply Lemma~\ref{lm:ContOfJ} to $\V\kappa:=\B b/\|\V b\|_1$, $\V\gamma:=\V\alpha/\|\V\alpha\|_1$, the given graphon $U$ and our chosen $\eta$ 
 %(and our $\V p$) 
 to find $\T{V}\in \Sball(\T U, \eta)$ such that $J_{\V b,\V p}(\T{V})\le J_{\V\alpha,\V p}(\T U)+\eta$. Thus
 $$
 -\inf_{\Sball(\T U,\eta)} J_{\V b,\V p}\ge -J_{\V b,\V p}(\T{V})\ge -J_{\V\alpha,\V p}(\T U)-\eta.
 $$
 By~\eqref{eq:red4}, this is a lower bound on the expression in~\eqref{eq:red3}. Taking the limit as $\eta\to 0$ we get the required LDP lower bound~\eqref{eq:lower}. This finishes the proof of Theorem~\ref{th:GenLDP}.
 \end{proof}

\section{Proof of Theorem~\ref{th:ourLDP}}\label{ourLDP}

Recall that $W$ is a $k$-step graphon with non-null parts whose values are encoded by a symmetric $k\times k$ matrix $\V p\in [0,1]^{k\times k}$. We consider the $W$-random graph $\I G(n,W)$ where we first sample $n$ independent uniform points $x_1,\dots,x_n\in [0,1]$ and then make each pair $\{i,j\}\subseteq [n]$ an edge with probability $W(x_i,x_j)$. We have to prove an LDP for the corresponding sequence $(\tR_{n,W})_{n\in\I N}$ of measures on the metric space $(\tW,\delta_\Box)$ with speed $n^2$ and the rate function $R_{\V p}$ that was defined in~\eqref{eq:R}. Recall that the lower semi-continuity of $R_{\V p}$ was established in Corollary~\ref{cr:lsc}.

Let us show the lower bound. Since the underlying space $(\tW,\delta_\Box)$ is compact, it is enough to prove the bound in~\eqref{eq:lower} of Lemma~\ref{lm:LDP} for any given $\T U\in \T{\C W}$, that is, that
\beq{eq:L1}
\lim_{\eta\to 0}\liminf_{n\to\infty} \frac{1}{n^2} \log \tR_{n,W}(\Sball(\widetilde U,\eta))\ge - R_{\V p}(\T U).
\eeq
Take any $\e>0$. By the definition of $R_{\V p}$, we can fix a vector $\V\alpha=(\alpha_1,\dots,\alpha_k)\in [0,1]^k$ 
such that $\|\V\alpha\|_1=1$ and 
 $$
 R_{\V p}(\T U)\ge J_{\V \alpha,\V p}(\T U)-\e.
$$
Let  $m$ be the number of non-zero entries of $\V \alpha$. As $\V \alpha$ is non-zero, we have $m\ge 1$. We can assume by symmetry that $\alpha_1,\dots,\alpha_m$ are the non-zero entries. Define $c:=\frac12 \min_{i\in [m]} \alpha_i>0$. 

For each $n\in\I N$, take any integer vector $\V a_n=(a_{n,1},\dots,a_{n,k})\in \NZ^k$ such that $\|\V a_n\|_1=n$ and $\|\V a_{n}-n\,\V\alpha\|_\infty<1$ (in particular, we have $a_{n,i}=0$ if $\alpha_i=0$). 

Let $n$ be sufficiently large. In particular, for every $i\in [m]$ we have that $a_{n,i}/n\ge c$. 
When we generate $G\sim \I G(n,W)$ by choosing first random $x_1,\dots,x_n\in [0,1]$, 
 it holds with probability at least, very roughly, $c^{-n}$ that for each $i\in [k]$ the number of $x_j$'s that belong to the $i$-th part of the step graphon~$W$ is exactly~$a_{n,i}$. Conditioned on this event of positive measure, the resulting graphon
 $\tf{G}$ is distributed according to $\tP_{\V a_n,\V p}$. Thus 
 $$
 \tR_{n,W}(S)\ge c^{-n}\,\tP_{\V a_n,\V p}(S),\quad\mbox{for every $S\subseteq \tW$}.
 $$
 This and the new LDP result (Theorem~\ref{th:GenLDP}) give that, for every $\eta>0$, %we have with $S:=\Sball(\widetilde U,\eta)$ that
 \begin{eqnarray*}
 \limsup_{n\to\infty} \frac{1}{n^2} \log \tR_{n,W}(\Sball(\widetilde U,\eta))&\ge& \limsup_{n\to\infty} \frac{1}{n^2} \log \tP_{\V a_n,\V p}(\Sball(\widetilde U,\eta))\\
 &\ge& -\inf_{\Sball(\T U,\eta/2)} J_{\V\alpha,\V p}
 \ \ge\ -J_{\V\alpha,\V p}(\T U)\ \ge\ -R_{\V p}(\T U)-\e.
 \end{eqnarray*}
 Taking the limit as $\eta\to 0$, we conclude that the LDP lower bound~\eqref{eq:L1} holds within additive error~$\e$. As $\e>0$ was arbitrary, the lower bound
 % in~\eqref{eq:L1} 
 holds.
 
 Let us show the upper bound~\eqref{eq:upperGen} of Definition~\ref{df:LDP} for any closed set $F\subseteq \tW$, that is, that
  \beq{eq:U3}
 \limsup_{n\to\infty} \frac1{n^2} \log \tR_{n,W}(F)\le -\inf_F R_{\V p}.
 \eeq

 For each $n\in\I N$, we can write $\tR_{n,W}(F)$ as the sum over all  $\V a\in\NZ^k$ with $\|\V a\|_1=n$ of the probability that the distribution of random independent $x_1,\dots,x_n\in [0,1]$ per $k$ steps of $W$ is given by the vector $\V a$ times the probability conditioned on $\V a$ to hit the set~$F$. This conditional probability is exactly $\tP_{\V a,\V p}(F)$. Thus $\tR_{n,W}(F)$ is a convex combination of the reals $\tP_{\V a,\V p}(F)$ and there is a vector $\V a_n\in \NZ^k$ with $\|\V a_n\|_1=n$ such that
 \beq{eq:an}
 \tR_{n,W}(F)\le \tP_{\V a_n,\V p}(F).
 \eeq
 Fix one such vector~$\V a_n$ for each $n\in\I N$.
 
 Since the set of all real vectors $\V\alpha\in [0,1]^k$ with $\|\V\alpha\|_1=1$ is compact, we can find an increasing sequence $(n_i)_{i\in\I N}$ of integers such that 
 \beq{eq:U1}
 \limsup_{n\to\infty} \frac1{n^2} \log \tR_{n,W}(F)=\lim_{i\to\infty} \frac1{n_i^2} \log \tR_{n_i,W}(F),
 \eeq
 and the scaled vectors $\frac1{n_i}\,\V a_{n_i}$ converge to some real vector $\V\alpha\in [0,1]^k$.
 
 Let $(\V b_n)_{n\in\I N}$ be obtained by filling the gaps in $(\V a_{n_i})_{n\in\I N}$, meaning that if $n=n_i$ for some  $i\in\I N$ then we let $\V b_n:=\V a_{n_i}$; otherwise we pick any $\V b_n\in\NZ^k$ that satisfies $\|\V b_n\|_1=n$ and $\|\V b_n-n\,\V\alpha\|_\infty<1$. Since the normalised vectors $\V b_{n}/\|\V b_{n}\|_1$ converge to the same limiting vector~$\V\alpha$, we have by Theorem~\ref{th:GenLDP} that
 \beq{eq:U2}
 \limsup_{i\to\infty} \frac1{n_i^2} \log \tP_{\V a_{n_i},\V p}(F)\le \limsup_{n\to\infty} \frac1{n^2} \log \tP_{\V b_{n},\V p}(F)\le -\inf_F J_{\V\alpha,\V p}.
 %\le -\inf_F R_{\V p}.
 \eeq
 Putting~\eqref{eq:an}, \eqref{eq:U1} and~\eqref{eq:U2} together with the trivial consequence $J_{\V\alpha,\V p}\ge R_{\V p}$ of the definition of $R_{\V p}$, we get the desired upper bound~\eqref{eq:U3}. This finishes the proof of Theorem~\ref{th:ourLDP}.

\section*{Acknowledgements}

Jan Greb\'\i k and Oleg Pikhurko were supported by 
Leverhulme Research Project Grant RPG-2018-424.

%\bibliography{oleg,sets,misc,ramsey,enum,number,posets,sat,ex,matroid,design,random,graph,general,geometry,algorithm,Analysis,limits}

% \bib, bibdiv, biblist are defined by the amsrefs package.
\begin{bibdiv}
\begin{biblist}

\bib{BCGPS}{unpublished}{
      author={Borgs, C.},
      author={Chayes, J.},
      author={Gaudio, J.},
      author={Petti, S.},
      author={Sen, S.},
       title={A large deviation principle for block models},
        date={2020},
        note={E-print arxiv:2007.14508},
}

\bib{Chatterjee16bams}{article}{
      author={Chatterjee, S.},
       title={An introduction to large deviations for random graphs},
        date={2016},
     journal={Bull. Amer. Math. Soc. (N.S.)},
      volume={53},
       pages={617\ndash 642},
}

\bib{Chatterjee17ldrg}{book}{
      author={Chatterjee, S.},
       title={Large deviations for random graphs},
      series={Lecture Notes in Mathematics},
   publisher={Springer, Cham},
        date={2017},
      volume={2197},
        note={Lecture notes from the 45th Probability Summer School held in
  Saint-Flour, June 2015},
}

\bib{ChatterjeeVaradhan11}{article}{
      author={Chatterjee, S.},
      author={Varadhan, S. R.~S.},
       title={The large deviation principle for the {Erd\H{o}s-R\'enyi} random
  graph},
        date={2011},
     journal={Europ.\ J.\ Combin.},
      volume={32},
       pages={1000\ndash 1017},
}

\bib{Cohn13mt}{book}{
      author={Cohn, D.~L.},
       title={Measure theory},
     edition={Second},
      series={Birkh\"{a}user Advanced Texts: Basel Textbooks},
   publisher={Birkh\"{a}user/Springer, New York},
        date={2013},
}

\bib{DemboZeitouni10ldta}{book}{
      author={Dembo, A.},
      author={Zeitouni, O.},
       title={Large deviations techniques and applications},
      series={Stochastic Modelling and Applied Probability},
   publisher={Springer-Verlag, Berlin},
        date={2010},
      volume={38},
        note={Corrected reprint of the second (1998) edition},
}

\bib{GrebikPikhurko:LDP}{unpublished}{
      author={Greb{\'\i}k, J.},
      author={Pikhurko, O.},
       title={Large deviation for graphon sampling},
        date={2021},
        note={In preparation},
}

\bib{Kechris:cdst}{book}{
      author={Kechris, A.~S.},
       title={Classical descriptive set theory},
      series={Graduate Texts in Mathematics},
   publisher={Springer-Verlag, New York},
        date={1995},
      volume={156},
}

\bib{Lovasz:lngl}{book}{
      author={{Lov\'asz}, L.},
       title={Large networks and graph limits},
      series={Colloquium Publications},
   publisher={Amer.\ Math.\ Soc.},
        date={2012},
}

\bib{LovaszSzegedy07gafa}{article}{
      author={Lov{\'a}sz, L.},
      author={Szegedy, B.},
       title={{Szemer\'edi's} lemma for the analyst},
        date={2007},
     journal={Geom.\ Func.\ Analysis},
      volume={17},
       pages={252\ndash 270},
}

\bib{RassoulaghaSeppelainen14cldigm}{book}{
      author={Rassoul-Agha, F.},
      author={Sepp\"{a}l\"{a}inen, T.},
       title={A course on large deviations with an introduction to {G}ibbs
  measures},
      series={Graduate Studies in Mathematics},
   publisher={American Mathematical Society, Providence, RI},
        date={2015},
      volume={162},
}

\bib{Srivastava98cbs}{book}{
      author={Srivastava, S.~M.},
       title={A course on {B}orel sets},
      series={Graduate Texts in Mathematics},
   publisher={Springer-Verlag, New York},
        date={1998},
      volume={180},
}

\end{biblist}
\end{bibdiv}

\end{document}